\theoremstyle{plain}
\numberwithin{equation}{section}
\newtheorem{definition}{Definition}[section]
\newtheorem{theorem}[definition]{Theorem}
\newtheorem*{theorem*}{Theorem}
\newtheorem{assumption}[definition]{Assumption}
\newtheorem{remark}[definition]{Remark}
\newtheorem*{remark*}{Remark}
\newtheorem*{sideremark*}{Side Remark}
\newtheorem*{claim*}{Claim}
\newtheorem*{q*}{Question}
\newtheorem{lemma}[definition]{Lemma}
\newtheorem{corollary}[definition]{Corollary}
\newtheorem*{corollary*}{Corollary}
\newtheorem{example}[definition]{Example}
\newtheorem{proposition}[definition]{Proposition}
\newcommand{\R}{\mathbb{R}}
\newcommand{\J}{{\mathbf{J}}}
\newcommand{\KK}{{\mathcal{K}}}
\newcommand{\na}{\nabla}
\newcommand{\emb}{\hookrightarrow}
\newcommand{\id}{\text{Id}}
\newcommand{\p}{\partial}
\newcommand{\e}{\varepsilon}
\newcommand{\C}{\mathbb{C}}
\newcommand{\dd}{{\rm d}}
\newcommand{\G}{\Gamma}
\newcommand{\dvg}{\dd {\rm Vol}_g}
\newcommand{\Exp}{{\bf Exp}}
\newcommand{\tor}{{\bf T}^2}
\newcommand{\lap}{{\mathscr{L}}}
\newcommand{\diff}{{\bf Diff}}
\newcommand{\sdiff}{{\bf SDiff}}
\newcommand{\hsdiff}{{\bf H}^s \sdiff}
\newcommand{\2}{{\sqrt{-1}}}
\newcommand{\End}{{\rm End}}
\newcommand{\HH}{{\mathscr{H}}}
\newcommand{\GG}{{\mathscr{G}}}
\newcommand{\bra}{\left\langle}
\newcommand{\ket}{\right\rangle}
\newcommand{\mres}{\mathbin{\vrule height 1.6ex depth 0pt width
0.13ex\vrule height 0.13ex depth 0pt width 1.3ex}}
\newcommand{\sol}{{\mathfrak{S}}}
\newcommand{\ppl}{{\sigma_{\bf ppl}}}
\newcommand{\1}{{\mathbf{1}}}
\newcommand{\harm}{{\bf Harm}^1(\Sigma)}
\newcommand{\main}{[{\rm MAIN}]}
\newcommand{\volg}{{\rm Vol}_g}
\newcommand{\ball}{{\bf B}}
\newcommand{\rie}{{\mathcal{R}}}
\newcommand{\tpsi}{\widetilde{\psi}}
\newcommand{\tphi}{\widetilde{\phi}}
\def\Xint#1{\mathchoice
{\XXint\displaystyle\textstyle{#1}}%
{\XXint\textstyle\scriptstyle{#1}}%
{\XXint\scriptstyle\scriptscriptstyle{#1}}%
{\XXint\scriptscriptstyle\scriptscriptstyle{#1}}%
\!\int}
\def\XXint#1#2#3{{\setbox0=\hbox{$#1{#2#3}{\int}$ }
\vcenter{\hbox{$#2#3$ }}\kern-.6\wd0}}
\def\dashint{\Xint-}
\title{On analysis of the exponential map of volume-preserving diffeomorphism group on closed orientable surfaces through the vorticity}
\author{Siran Li}
\address{Siran Li: School of Mathematical Sciences $\&$ IMA-Shanghai, Shanghai Jiao Tong University, No.~6 Science Buildings,
800 Dongchuan Road, Minhang District, Shanghai, China (200240)}
\email{\texttt{siran.li@sjtu.edu.cn}}
\keywords{Diffeomorphism group; volume-preserving diffeomorphism; exponential map; infinite-dimensional Lie group; 2-dimensional Euler equation; incompressible flow; vorticity; nonlinear Fredholm map.}
\subjclass[2010]{35Q31, 46T05, 46T20, 53C21, 58E10, 58E40, 58B15, 58J20, 47A53}
\date{\today}
\begin{document}

\begin{abstract}
We study the exponential map of group of volume-preserving diffeomorphisms on  closed orientable surfaces via the vorticity formulation of the incompressible Euler equation. We present an alternative, fluid dynamical proof of the theorem of Ebin--Misio\l{}ek--Preston \cite{emp}: the exponential is a nonlinear Fredholm mapping of index zero. We extend Shnirelman's rigidity result \cite{s} for the exponential map from 2-dimensional flat torus to arbitrary orientable closed surfaces. That is, we prove that the exponential map is Fredholm quasiregular.
\end{abstract}
\maketitle

\section{Introduction}\label{sec: intro}

The space of volume-preserving diffeomorphisms on a  Riemannian manifold $(M,g)$ is an important topic of research in global analysis. On the one hand, it forms an infinite-dimensional Lie group that embeds into the space of $L^2$-maps from $(M,g)$ to $\R^N$, where $N$ is sufficiently large.  On the other hand, it connects naturally to PDEs in mathematical hydrodynamics: as established by Arnold in his seminal paper \cite{a} in 1966, the geodesic equation on this infinite-dimensional Lie group is precisely the Euler equation describing the motion of an  incompressible,  inviscid fluid on $(M,g)$:
\begin{equation}\label{euler}
\begin{cases}
\frac{\p v}{\p t} +v \cdot \na v + \na p = 0 \qquad\text{ in } [0,T]\times M,\\
v|_{t=0} = v_0 \qquad \text{ on } M
\end{cases}
\end{equation}
for timespan $T>0$, velocity $v \in \G(TM)$, and pressure $p: M \to \R$.

For an $n$-dimensional closed (\emph{i.e.}, compact, without boundary) Riemannian manifold $(M,g)$,  write $\diff(M)$ for the group of diffeomorphisms on $M$, and define
\begin{align*}
\sdiff(M,g) = \left\{ \phi \in \diff(M):\, \phi^\# \dvg = \dvg \right\},
\end{align*}
where $\phi^\#$ is the pullback under $\phi$ and $\dvg$ is the Riemannian volume form on $(M,g)$. We shall also consider the space $\hsdiff (M,g)$ of volume-preserving diffeomorphisms with Sobolev $H^s$-regularity. When $s > \lfloor \frac{n}{2} \rfloor + 1$, $\hsdiff(M,g)$ is an infinite-dimensional Lie group and $
\hsdiff(M,g) \emb H^s\left(M,\R^N\right)$  as a Banach submanifold.  Here $m$ is any natural number such that $(M,g)$ isometrically embeds into $\R^m$, whose existence follows from Nash embedding; see \cite{n}. Its Lie algebra is identified with the space of  incompressible vectorfields on $(M,g)$; \emph{i.e.},
\begin{equation}\label{lie alg, identification}
T_{\id} \hsdiff (M,g) = \left\{ v \in {\bf H}^s(M,TM):\, {\rm div}\, v = 0 \right\}.
\end{equation}

Here and hereafter, for any vector bundle $E$ over $M$ and $X=W^{2,p}, H^s, \ldots$, we shall use $X(M,E)$ to designate the space of $E$-sections in the regularity class $X$. One also write $\G(E)$ for the space of $E$-sections (with unspecified sufficient regularity; usually assumed to be $C^\infty$ in this paper). The divergence operator ${\rm div}: {\bf H}^s(M,TM) \to {\bf H}^{s-1}(M,\R)$ is defined as usual via duality. See Appendix \ref{sec: appendix}.

In 1970, Ebin--Marsden \cite{em} proved that the infinite-dimensional manifold $\hsdiff(M,g)$ admits a smooth Levi-Civita connection and that,  given an initial velocity in $T_{\bullet} \hsdiff(M,g)$, the geodesic equation is locally well-posed. Thus, there is a well defined exponential map
\begin{align}\label{exp}
\Exp: \mathcal{N} \subset T_{\id}\hsdiff(M,g) \, \longrightarrow  \,  \hsdiff(M,g)
\end{align}
where $\mathcal{N}$ is a neighbourhood of $0$. It is classically known that $\Exp$ is smooth and invertible  in the vicinity of $0$. Recall \eqref{lie alg, identification} for a characterisation of $T_{\id}\hsdiff(M,g)$.

From now on, we denote by $(\Sigma,g)$ a closed orientable surface.  For $M=\Sigma$, the domain $\mathcal{N}$ of $\Exp$  in \eqref{exp} can be taken as the whole Lie algebra $T_{\id}\hsdiff(\Sigma,g)$. The global behaviour of $\Exp$ on $T_{\id}\hsdiff(\Sigma,g)$, however, has enormous subtleties. Misio\l{}ek \cite{m} proved the existence of conjugate points along geodesics on $\hsdiff(\Sigma=\tor)$, where $\tor$ = flat torus. In particular, there exists $v \in T_{\id}\hsdiff(\tor)$ with $\dim\ker(\dd_v\Exp) \geq 1$ (``mono-conjugate'') for $$\dd_v \Exp: T_vT_{\id}\hsdiff(\tor) \longrightarrow T_{\Exp(v)}\hsdiff(\tor).$$ See also Shnirelman \cite{s1} for  analogous result on 3-ball.

On the other hand, for the volume-preserving diffeomorphism group on 2D closed surfaces, $\Exp$ is just a little worse than being conjugate point-free: as shown by Ebin--Misio\l{}ek--Preston \cite[Theorem~1]{emp}, $\Exp:T_{\id} \hsdiff(\Sigma,g) \to \hsdiff(\Sigma,g)$ is \emph{Fredholm of index zero}.   

\begin{definition}
Let $X$ and $Y$ be Banach spaces. A smooth, nonlinear map $f:X \to Y$ is {\bf Fredholm} if the Fr\'{e}chet derivative $\dd_pf: X \to Y$ is a Fredholm operator at each $p \in X$. The {\bf Fredholm index} of $f$, denoted as ${\rm Ind} (f)$, is the index of $\dd_pf =\dd f\big|_p$. 
\end{definition}
Here, recall that
\begin{align*}
{\rm Ind} (f) := {\rm Ind } \left(\dd_p f\right) = \dim \ker \left(\dd_p f\right) - \dim {\rm coker} \left(\dd_p f\right).
\end{align*}
It is independent of $p$ when the domain of $f$ is connected; \emph{e.g.}, in our case the domain is a Banach space. The study of Fredholmness of \emph{nonlinear} maps
is pioneered by Smale \cite{smale}.

A refined characterisation of $\Exp$ in the case $\Sigma = \tor$ was obtained in \cite{s} by Shnirelman, who proved that $\Exp$ possesses certain global rigidity in its geometrical structures. More precisely, the nonlinear functional-analytic notions below  were introduced by Shnirelman in 1970s: 
\begin{definition}[Shnirelman \cite{s0}]
Let $X$ and $Y$ be Banach spaces, and let $F: X \to Y$ be a  continuous map that is nonlinear in general. Then 
\begin{enumerate}
\item
$F$ is said to be a {\bf ruled map} if $X$ is foliated into affine subspaces $X^\alpha \subset X$ indexed by $\R^k$, \emph{i.e.}, $X = \bigsqcup_{\alpha \in \R^k} X^\alpha$, such that $F|_{X^\alpha}: X^\alpha \to Y$ is affine for each $\alpha$, and that $F|_{X^\alpha}$ depends continuously on $\alpha$.
\item
$F$ is {\bf quasiruled} if it can be approximated locally uniformly by ruled maps $\{F_k\}$.
\item
$F$ is {\bf Fredholm quasiruled} if $F$ is quasiruled and the approximating ruled maps $\{F_k\}$ as in (2) above satisfy the following conditions: $Y^\alpha := {\rm range}\, \left(F_k|_{X^\alpha}\right)$ is closed for each $\alpha$, ${\rm codim}\,Y^\alpha = {\rm codim}\,X^\alpha = k$, $F_k|_{X^\alpha}: X^\alpha \to Y^\alpha$ are bijective, and $\left(F_k|_{X^\alpha}\right)^{-1}$ are locally uniformly continuous for all sufficiently large $k$. (Here $k$ and $X^\alpha$ are as in (1) above.)
\item
Assume that $X^-$, $Y^{-}$ are dense, compactly embedded subsets of $X$ and $Y$, respectively. A continuous map $f: X^- \to Y^-$ is said to be {\bf quasilinear} if (i), it can be extended to a continuous map $F: X \to Y$, and (ii), there exist continuous  affine maps $\{B_u : X^- \to Y^-\}_{u \in X}$ which depends continuously on the parameter $u$, such that $A(u) =B_u (u)$ for each $u \in X^-$.
\end{enumerate}
\end{definition}

Fredholm quasiruled maps have nice topological--geometrical properties. For instance, they admit a definition of topological degree, thus leading to no-wandering properties (preservation of domain). Moreover, a notion of \emph{Fredholm quasiruled Banach manifolds} can be naturally defined; it forms a category, for which Fredholm quasiruled maps are  morphisms. As a primary example, $\hsdiff(M,g)$ for $s > \lfloor\frac{n}{2}\rfloor + 1$ is a Fredholm quasiruled Banach manifold. See \cite{s0} for details.

In \cite{s}, by using tools from ``microglobal'' analysis (coined word: microlocal + global analysis), Shnirelman proved that $\Exp$ is Fredholm quasiruled when $\Sigma = \tor$, hence the geodesic flow on $\hsdiff(\tor)$ for $s>2$ is a Fredholm quasiruled flow.

To summarise, the following results  have been established concerning the exponential map on  volume-preserving diffeomorphism group of surfaces. See Ebin--Misio\l{}ek--Preston \cite[Theorem~1]{emp} $\&$ Shnirelman \cite[Theorem~3.1]{s}:
\begin{theorem}\label{thm, earlier}
Let $(\Sigma,g)$ be a compact surface. Then $\Exp:T_{\id} \hsdiff(\Sigma,g) \to \hsdiff(\Sigma,g)$ is  a nonlinear Fredholm map of index zero as long as $s>2$. Moreover, for $(\Sigma,g)=\tor=$  the flat torus, $\Exp$ is Fredholm quasiruled. 
\end{theorem}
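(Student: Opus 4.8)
The plan is to establish the two assertions in turn: Fredholmness of index zero through the vorticity formulation of the $2$D Euler equation --- the ``fluid dynamical'' route announced in the abstract --- and the quasiruled property on $\tor$ following the microlocal--global method of Shnirelman \cite{s}. For the first part, fix $v\in T_{\id}\hsdiff(\Sigma,g)$, let $t\mapsto\eta_t$ be the geodesic through $\id$ with initial velocity $v$, let $u_t=\dot\eta_t\circ\eta_t^{-1}$ be the Eulerian velocity solving \eqref{euler}, and let $\omega_t:={\rm curl}\,u_t$ be the scalar vorticity. Two facts drive the argument: the vorticity is transported, $\omega_t=\omega_0\circ\eta_t^{-1}$; and $u_t$ is recovered from $\omega_t$ --- together with its component in the finite-dimensional space $\harm$ when the genus is positive --- through the Biot--Savart law $\omega\mapsto\nabla^\perp\Delta^{-1}\omega$, an elliptic operator of order $-1$, hence smoothing, and compact on the closed surface $\Sigma$. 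Passing to stream functions $u=\nabla^\perp\psi$, $\Delta\psi=\omega$, recasts \eqref{euler} as $\p_t\omega+\{\psi,\omega\}=0$ and identifies the algebra of incompressible fields modulo harmonics with $(C^\infty(\Sigma)/\R,\{\cdot,\cdot\})$; in particular the enstrophy $\int_\Sigma\omega^2\,\dvg$ is conserved along the flow, a rigidity special to two dimensions.

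The derivative $\dd_v\Exp(w)$, for $w\in T_vT_{\id}\hsdiff=T_{\id}\hsdiff$, is the time-$1$ value $Y_1$ of the Jacobi field $Y$ along $\eta$ determined by $\frac{D^2}{\dd t^2}Y+R(Y,\dot\eta)\dot\eta=0$, $Y_0=0$, $\frac{D}{\dd t}Y\big|_{t=0}=w$, with $R$ and $\frac{D}{\dd t}$ the curvature and covariant derivative of $\hsdiff(\Sigma,g)$, smooth by Ebin--Marsden. Solving this as a Volterra equation yields $\dd_v\Exp=\Pi_{0\to1}+\mathcal{C}_v$, where $\Pi_{0\to1}$ is the parallel transport along $\eta$ from $t=0$ to $t=1$ --- an isometry, hence (after right-translation to $T_{\id}\hsdiff$) a Banach-space isomorphism --- and $\mathcal{C}_v=-\int_0^1(1-s)\,\Pi_{s\to1}\,\mathcal{R}_{\dot\eta}(s)\,Y_s\,\dd s$ with $\mathcal{R}_{\dot\eta}(s):W\mapsto R(W,\dot\eta_s)\dot\eta_s$. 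Equivalently, one may linearise the relation $\omega_t=\omega_0\circ\eta_t^{-1}$ directly, obtaining a transport equation for the perturbation vorticity whose source term --- once the perturbation velocity is re-expressed through Biot--Savart --- is of lower order than the transport term; this is the place where the vorticity formulation does its work.

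The crux is that \emph{in dimension two} the solution operator of the Jacobi equation is a compact perturbation of parallel transport, which reduces to showing that the curvature operator $\mathcal{R}_{\dot\eta}(s)$ is \emph{compact} on $T_{\id}\hsdiff(\Sigma,g)$: written through the Poisson-bracket/stream-function formalism and the order-$(-1)$ Biot--Savart law, its leading contributions factor through the smoothing operator $\Delta^{-1}$, so $\mathcal{R}_{\dot\eta}(s)$ gains regularity and is compact by the Rellich theorem on the compact $\Sigma$ --- whereas in dimension $\ge3$ the analogous operator is merely bounded. Granting this, $\mathcal{C}_v$ is compact and $\dd_v\Exp$ is Fredholm; and since the Fredholm index is locally constant, $T_{\id}\hsdiff(\Sigma,g)$ is connected, and $\dd_0\Exp=\id$ has index $0$, we conclude ${\rm Ind}(\Exp)=0$ everywhere. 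I expect establishing the compactness of $\mathcal{R}_{\dot\eta}(s)$ --- together with the attendant function-space bookkeeping, the vorticity living one derivative below the velocity and the Gauss-curvature terms on a non-flat $\Sigma$ demanding separate control --- to be the principal obstacle.

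For the quasiruled property on the flat torus I would follow Shnirelman's microglobal scheme. Using Fourier series, foliate $T_{\id}\hsdiff(\tor)$ into affine leaves indexed by $\R^k$ by fixing the finitely many lowest Fourier modes of the stream function, and approximate $\Exp$ by maps $\Exp_N$ obtained from the Euler flow with its quadratic (Poisson-bracket) nonlinearity frequency-truncated at level $N$; on each leaf $\Exp_N$ is affine in the remaining high-frequency variables, so each $\Exp_N$ is ruled, and $\Exp_N\to\Exp$ locally uniformly by local well-posedness and continuous dependence for \eqref{euler}. It then remains to check the four conditions defining Fredholm quasiruledness --- leafwise closed range of codimension $k$, leafwise bijectivity of $\Exp_N$, and locally uniform continuity of the leafwise inverses, uniformly for large $N$ --- which, as in \cite{s}, is carried by pseudodifferential/microlocal estimates on $\tor$ uniform in $N$, the second substantial technical component of the argument.
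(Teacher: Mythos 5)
Your route to the Fredholmness is genuinely different from the paper's. You argue through the Jacobi equation along the geodesic, writing $\dd_v\Exp=\Pi_{0\to1}+\mathcal{C}_v$ and reducing to compactness of the curvature operator $\mathcal{R}_{\dot\eta}(s)$ --- this is essentially the Ebin--Misio\l{}ek--Preston/Misio\l{}ek--Preston line of argument from \cite{emp,mp}. The paper instead advertises and carries out an \emph{alternative} proof, entirely on the Eulerian side: it introduces the paraproduct quantity $W_t=\Pi_{D\Psi_t}\Phi_t$ built from Whitney/Nash extensions of $\varphi_t$ and $\varphi_t^{-1}$, computes $\partial_t W_t$ via paracomposition (Lemma~\ref{lem: dt W}), shows that the divergence part of $\partial_t W_t$ is higher-regularity noise (Lemma~\ref{lem: divergence of W}), and identifies the curl part with the operator $\widetilde{\mathcal{B}}=\Pi_{\dd\varphi_t^{-1}}\circ\KK_{\varphi_t}\circ\sol\circ\KK_{\varphi_t^{-1}}$ followed by $\star\dd$, whose principal symbol is computed explicitly and shown to be elliptic and $0$-homogeneous (Lemma~\ref{lem: final}); ellipticity gives Fredholmness, and index zero falls out of invertibility of each factor. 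What your approach buys is geometric transparency (Jacobi fields, parallel transport, compactness); what the paper's approach buys is that the same paraproduct/symbol machinery simultaneously yields the Fredholm quasiruled structure (since the ruled approximants are read off directly from the paraproduct decomposition), which is why the paper can then push past the torus. For the torus quasiruled part your sketch tracks Shnirelman's \cite{s} microglobal/Fourier-truncation method, which is indeed what the second clause of the theorem cites.

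One caveat worth flagging in your Fredholmness argument: the assertion that the pointwise curvature operator $Z\mapsto R(Z,\dot\eta)\dot\eta$ on $T_{\id}\hsdiff(\Sigma,g)$ is compact because it ``factors through $\Delta^{-1}$'' is stated too optimistically. The Levi-Civita curvature of $\hsdiff$, written via the Leray projection $P$ as $\tilde\nabla_XY=P(\nabla_XY)$, contains zeroth-order terms in the Jacobi-field slot that do not obviously gain a derivative; in \cite{emp} compactness is not obtained for the curvature operator in this naive way but for a related operator appearing in the linearised Euler/transport system, where the two-dimensional vorticity structure and the order-$(-1)$ Biot--Savart operator enter in a more delicate combination (and where the genus-dependent harmonic part $\harm$ has to be tracked). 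As written, this is the step in your sketch that would need the most care, and it is precisely the step the paper sidesteps by working with the vorticity and the quantity $W$ rather than with the intrinsic curvature of the group.
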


At the end of the paper, Shnirelman \cite[p.S395]{s} remarked --- 
\begin{quotation}
``Note that we have used the global flat structure on the torus. The proof of the analogous theorem for the fluid motion on a curved surface should require additional devices.''
\end{quotation}

The goal of this note is to address this problem. We establish the following.
\begin{theorem}\label{thm}
Theorem~\ref{thm, earlier} is valid for any closed orientable surface $(\Sigma,g)$. That is, $\Exp:T_{\id} \hsdiff(\Sigma,g) \to \hsdiff(\Sigma,g)$ is  a Fredholm quasiruled map of index zero whenever  $s>2$.
\end{theorem}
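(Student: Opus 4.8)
\medskip
\noindent\textbf{Outline of the proof of Theorem~\ref{thm}.}
The plan is to run the whole argument in the \emph{vorticity picture}, which both yields a fluid-dynamical proof of the Ebin--Misio\l{}ek--Preston statement and replaces the ``global flat structure'' that Shnirelman exploited on $\tor$.

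\textbf{Step 1 (reformulating $\Exp$).} On a closed orientable surface the Hodge decomposition identifies a divergence-free $v$ with a pair $(\omega,h)$: the scalar vorticity $\omega:=\star_g\dd v^\flat\in H^{s-1}(\Sigma,\R)$ (of zero mean) and the harmonic part $h\in\harm$ of $v^\flat$, $\dim\harm = 2\,\mathrm{genus}(\Sigma)$. The inverse map $v^\flat = \star_g\dd\Delta_g^{-1}\omega + h$ is a Biot--Savart operator of order $-1$ plus a finite-dimensional correction. Along the geodesic $\phi_t = \gamma_{v_0}(t)$, Kelvin's circulation theorem gives $[\phi_t^\# v_t^\flat] = [v_0^\flat]$ in $H^1_{\rm dR}(\Sigma)$, and $\phi_t\sim\id$ then forces $h_t\equiv h_0$, while $\omega_t = \omega_0\circ\phi_t^{-1}$. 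Hence $\Exp(v_0)=\phi_1$, where
\begin{equation*}
\dot\phi_t = \Bigl(\bigl(\star_g\dd\Delta_g^{-1}(\omega_0\circ\phi_t^{-1})\bigr)^\sharp + h_0^\sharp\Bigr)\circ\phi_t,\qquad \phi_0 = \id ,
\end{equation*}
a closed system in which $h_0$ is merely a fixed parameter --- the simplification unavailable at the velocity level.

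\textbf{Step 2 (index zero).} Linearising $\p_t\omega + v\cdot\na\omega = 0$ gives $\p_t\omega' + v\cdot\na\omega' + v'\cdot\na\omega = 0$, with $v'$ reconstructed from $\omega'$ as above. The pure transport $\p_t\omega' + v\cdot\na\omega' = 0$ has solution operator an isomorphism of $H^{s-1}(\Sigma,\R)$ (composition with the $H^s$-diffeomorphism $\phi_t$, $s>2$), hence Fredholm of index $0$; by Duhamel the full linearised time-$1$ map differs from it by $\omega_0'\mapsto\int_0^1(\text{transport})(v'(\tau)\cdot\na\omega(\tau))\,\dd\tau$. This correction is compact: the two derivatives gained in reconstructing $v'$ from $\omega'$ beat the one lost in $\na\omega$, and, together with commutators of transport with $\na^g$, yield a net smoothing on $H^{s-1}$. (This is the vorticity-side version of the Ebin--Misio\l{}ek--Preston fact that in $2$D the curvature operator of $\hsdiff(\Sigma,g)$ is compact, arising from the double gain in $\Delta_g^{-1}$.) So $\dd_{v_0}\Exp = (\text{iso})\circ(I+\text{compact})$ and ${\rm Ind}(\Exp)=0$; the curvature enters only through lower-order symbol and Christoffel terms that do not affect compactness, and the $\harm$-sector is finite-dimensional.

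\textbf{Step 3 (Fredholm quasiruled).} Here I adapt Shnirelman's microglobal scheme, replacing Fourier analysis on $\tor$ by the spectral theory of $\Delta_g$ plus a geometric-optics description of high-frequency vorticity. With $\{e_j\}$ the eigenfunctions of $\Delta_g$ and $\Pi_k$ the projection of mean-zero vorticity onto $\mathrm{span}\{e_1,\dots,e_{k_0}\}$, foliate $X:=T_{\id}\hsdiff(\Sigma,g)$ into the affine leaves $X^\alpha := \{v_0:\Pi_k\omega_0 = \alpha_{\rm vort},\ h_0 = \alpha_{\rm harm}\}$ of codimension $k := k_0 + 2\,\mathrm{genus}(\Sigma)$, $\alpha\in\R^{k}$. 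On a leaf, the low vorticity modes and the harmonic part evolve by a finite-dimensional Galerkin truncation, with flow $\Psi^\alpha_t$ depending on $\alpha$ alone, and the high modes are advected \emph{passively and linearly} along $\Psi^\alpha_t$: in WKB form they behave to leading order like local ``shear flows'', the model ruled map (on $\tor$, $v_0=(f(x_2),0)$ evolves to $\phi_1 = (x_1+f(x_2),x_2)$, affine in $f$). This defines an approximant $\Exp_k$ affine on each $X^\alpha$; one checks that $Y^\alpha:=\mathrm{range}(\Exp_k|_{X^\alpha})$ is closed in $H^s(\Sigma,\R^N)$ with $\mathrm{codim}\,Y^\alpha = \mathrm{codim}\,X^\alpha = k$, that $\Exp_k|_{X^\alpha}:X^\alpha\to Y^\alpha$ is bijective with locally uniformly continuous inverse for large $k$, and that $\Exp_k\to\Exp$ locally uniformly --- the last two from stability estimates for the truncated and the exact vorticity transport. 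With $\hsdiff(\Sigma,g)$ being a Fredholm quasiruled Banach manifold, this proves the theorem.

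\textbf{Where the difficulty lies.} The substance --- Shnirelman's ``additional devices'' --- is entirely in running Step~3 without Fourier multipliers: (i) replacing frequency localisation and the translation-invariance of $\Delta$ by pseudodifferential/microlocal calculus on $(\Sigma,g)$, with \emph{uniform-in-$k$} bounds on $[\Pi_k,\star_g\dd\Delta_g^{-1}]$, $[\Pi_k,v\cdot\na]$, etc., tracking curvature-dependent subprincipal symbols; (ii) the $2\,\mathrm{genus}(\Sigma)$-dimensional harmonic sector, absent beyond the two mean modes on $\tor$, which couples to $\omega$ via Biot--Savart and the topology; and (iii), the crux, the quantitative stability bounds behind $\Exp_k\to\Exp$ and the uniform continuity of $(\Exp_k|_{X^\alpha})^{-1}$ --- these compare over a fixed time the exact Euler flow with its truncation, and on a curved surface one must invoke injectivity radius, curvature bounds and Weitzenb\"ock-type commutator identities where Shnirelman computed explicitly on $\tor$. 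I expect (iii), and the uniform-in-$k$ control in (i), to be the main obstruction.
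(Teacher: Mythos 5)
Your Step 1 matches the paper's starting point (vorticity transport plus a Biot--Savart operator with fixed harmonic/cohomological part), but the proposal stops short of a proof exactly where the theorem lives. Step 3 --- the Fredholm quasiruledness, i.e.\ the ``additional devices'' Shnirelman said a curved surface requires --- is only a programme: you yourself defer the locally uniform convergence $\Exp_k\to\Exp$, the closed-range/bijectivity/uniform-continuity properties of $\Exp_k|_{X^\alpha}$, and the uniform-in-$k$ commutator bounds, and call them ``the main obstruction.'' Moreover, the design of your ruled approximant is itself doubtful: advecting the high eigenmodes ``passively and linearly'' along the Galerkin flow $\Psi^\alpha_t$ discards the Biot--Savart feedback of the high-frequency vorticity on the velocity, which is \emph{not} negligible in the $H^s$ topology of the flow; Shnirelman's (and the paper's) way around this is not a spectral Galerkin truncation but a paradifferential one. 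The paper's actual devices are: (i) paraproducts and paracompositions on $(\Sigma,g)$ in the sense of Bernicot--Sire, replacing Fourier truncation; (ii) a Nash isometric embedding plus Whitney $H^s$-extension so that the Shnirelman quantity $W_t=\Pi_{D\Psi_t}\Phi_t$ makes sense off the torus; (iii) the intrinsic Biot--Savart operator built from de Rham's Green operator in a fixed cohomology class; and (iv) a principal-symbol computation showing that the order-zero operator $\widetilde{\mathcal{B}}=\star\dd\,[\,(\Pi_{\dd\varphi_t^{-1}}\circ\KK_{\varphi_t}\circ\sol\circ\KK_{\varphi_t^{-1}})(\omega)\,]^\sharp$ is elliptic, using the K\"ahler compatible triple $(g,\J,\omega)$ to identify the symbol with a ratio of metrics; this single structure delivers both the index-zero Fredholmness and, via quasilinearity of $\widetilde{\mathcal{B}}$, the quasiruledness. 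None of these appears, even in outline, in your Step 3.

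There is also a quantitative slip in Step 2. The reconstruction $\omega'\mapsto v'$ is of order $-1$ (as you correctly say in Step 1), so it gains one derivative, not two; hence the perturbation $\omega'\mapsto v'\cdot\na\omega$ maps $H^{s-1}\to H^{s-2}$ only, and is neither bounded nor compact on $H^{s-1}$ as stated. The ``net smoothing'' you invoke does not come from a naive derivative count but from commutator/paradifferential cancellations (in the paper, from identities such as \eqref{cancellation, paraprod} and the fact that paracompositions carry only the singularities of their argument), or, in Ebin--Misio\l{}ek--Preston's route, from the specific structure of the coadjoint operator in two dimensions. So even the index-zero part needs the machinery you have postponed. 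As it stands the proposal is a plausible research plan in the same spirit as the paper, but with the central arguments missing.
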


The main difficulty for proving Theorem~\ref{thm}, \emph{i.e.}, for extending Theorem~\ref{thm, earlier} to general compact orientable surface $\Sigma$ other than $\tor$, lies in the lack of Fourier analytic tools on $\Sigma$. To our remedy, however,  paradifferential calculus has been  developed recently on complete Riemannian manifolds with doubling property, lower volume bound for unit balls, and  Poincar\'{e} inequality. See Bernicot \cite{ber} and Bernicot--Sire \cite{bs}. We will collect some of these tools in \S\ref{section: paradiff} below.

Next, as vorticity is being transported by the Euler flow on $(\Sigma,g)$, it is handy to express $\Exp(v)$ for $v\in T_{\id} \hsdiff(\Sigma,g)$ as a function of the vorticity $\omega = \dd v \in {\bf H}^{s-1}\left(\Sigma, \bigwedge^2T^*\Sigma \right)$ which, by duality, can be identified with a scalarfield. The recovery of $v$ from $\omega$ is achieved via a pseudodifferential operator of order $-1$, known as the \emph{Biot--Savart operator} in the Euclidean case. We shall develop its analogue on $(\Sigma,g)$ in \S\ref{section: BS}. This will be done via the Green's operator for the Laplace--Beltrami operator $\Delta_g: \Omega^1(\Sigma) \to \Omega^2(\Sigma)$ on differential $1$-forms. See the classical work \cite{dr} by de Rham.

The tools elaborated in \S\S\ref{section: paradiff} $\&$ \ref{section: BS} enable us to prove  Theorem~\ref{thm} in \S\ref{section: proof}. We shall essentially follow Shnirelman's work \cite{s} for the  case $\Sigma = \tor$;  nevertheless, various new arguments are presented to deal with the non-flat geometry.

Apart from the proof for the rigidity (\emph{i.e.}, Fredholm quasiregular) property of $\Exp$ on closed orientable surfaces, this paper also provides an alternative proof  for (the orientable case of) Ebin--Misio\l{}ek--Preston \cite[Theorem~1]{emp}, which states that $\Exp$ on closed 2-dimensional surfaces is Fredholm of index zero. Our new proof is fluid dynamical in nature, which exploits the vorticity formulation of the 2-dimensional incompressible Euler equation. For future investigations, we hope to generalise our approach to non-orientable closed surfaces and to surfaces-with-boundary.

\smallskip
\noindent
{\bf Notations.} Throughout the paper, $(\Sigma,g)$ is a closed orientable Riemannian manifold of dimension 2. The superscript/subscript hash $\#$ always denotes the pullback/pushforward, while the superscript asterisk $*$ is reserved for adjoint operator. For a constant $C$, by writing $C=C(M,g)$ we mean that $C$ depends only on the geometry of $(M,g)$. An inner product is denoted as $\bullet$ if it is taken with respect to a Euclidean metric. We shall write $\na$ for Riemannian gradient and $D$ for Euclidean gradient. The index $s$ is always a number strictly greater than $2 = \lfloor\frac{\dim \Sigma}{2}\rfloor + 1$, hence ${\bf H}^s \emb L^\infty$ over $\Sigma$. The index $\sigma$ is an arbitrary number strictly greater than $s$.

\section{Paradifferential calculus on $(\Sigma,g)$}\label{section: paradiff}

In this work, we make essential use of the theory of paradifferential calculus for Riemannian manifolds with sub-Laplacian structures (for which compact surface $(\Sigma,g)$ is a special case) developed by Bernicot--Sire \cite{bs}.

\subsection{Geometric assumptions}

Let $(M,g)$ be a Riemannian manifold and let $\lap$ be a sub-Laplacian on $(M,g)$. That is, $\lap  = -\sum_{k=1}^K X_k^2$ where $\{X_k\}$ are real-valued vectorfields.  \begin{assumption}\label{assumption}
Suppose that $(M,g,\lap)$ satisfy the following (see \cite[p.941]{bs}):
\begin{enumerate}
\item
$(M,g)$ is (uniform) doubling, \emph{i.e.}, there exists a uniform constant $C_1>0$ such that 
\begin{align*}
\volg(\ball(x,2r)) \leq C_1 \volg(\ball(x,r))
\end{align*} 
for any $x \in M$ and $r>0$;
\item
$(M,g)$ supports the 1-Poincar\'{e} inequality, namely that there exists a uniform constant $C_2>0$ such that for each $f \in C^\infty_0(M)$, $r>0$, and any $Q$ = geodesic ball of radius $r$, 
\begin{align*}
\dashint_Q |f-f_Q|\,\dvg \leq C_2\, r\,  \dashint_Q |\na f|\,\dvg.
\end{align*}
\item
There exists a uniform constant $c_3>0$ such that $\volg(\ball(x,1)) \geq c_3$ for all $x \in M$;
\item
All the local Riesz transforms $\rie_I$ and $\overline{\rie_I}$ for arbitrary multiindex $I$ are $L^p \to L^p$-bounded for each $p \in ]1,\infty[$;
\item
The sub-Laplacian $\lap$ satisfies \cite[Assumption~1.11]{bs}: $\lap$ is injective and of type $\omega$ for some $\omega \in [0,\pi/2[$ on $L^2$, and there exists $\delta>1$ such that 
\begin{enumerate}
\item
For every $z \in S_{\pi/2 - \omega}$, the linear operator $e^{-z\lap}$ is given by a kernel $p_z$ such that
\begin{align*}
\left|p_z(x,y)\right| \leq \frac{C_4}{\volg(\ball(x, \sqrt{|z|}))} \left(1+\frac{d_g(x,y)}{\sqrt{|z|}}\right)^{-\log_2(C_1)-2N-\delta};
\end{align*}
\item
$\lap$ has a bounded $H_\infty$-calculus on $L^2$;
\item
The Riesz transform $\rie:=\na\lap^{-1/2}$ is $L^p\to L^p$-bounded for each $p \in ]1,\infty[$. 
\end{enumerate}
\end{enumerate}
\end{assumption}

Here and hereafter, $\na$ denotes the Riemannian gradient, $\ball(x,r)$ denotes the geodesic balls, $d_g$ is the Riemannian distance, $|\bullet|$ is the Riemannian length of vectors, and $\volg$ is the Riemannian volume on $(M,g)$. All the $L^p$-spaces and averaged integrals $\dashint$ are taken with respect to the Riemannian volume form $\dvg$. The number $N \geq 0$ in (5a) is the uniform constant in
\begin{align}\label{N}
\volg(\ball(y,r)) \leq C_5 \left(1+\frac{d_g(x,y)}{r}\right)^N \volg(\ball(x,r)) \qquad \text{for all $x,y \in M$ and $r>0$},
\end{align}
where $C_5$ is another uniform constant. By $\lap$ being of type $\omega$ on $L^2$ we mean that $\lap$ is a closed operator with spectrum in the sector $S_\omega:= \left\{z \in \C:\,|{\rm arg}(z)|\leq \omega\right\} \cup \{0\}$, such that for each $\nu > \omega$ there is a constant $c_\nu$ such that 
\begin{align*}
\left\|(\lap - \lambda\1)^{-1}\right\|_{L^2 \to L^2} \leq \frac{c_\nu}{|\lambda|}\qquad \text{ for all } \lambda \notin S_\nu.
\end{align*}
Also, that $\lap$ has a bounded $H_\infty$-calculus on $L^2$ means that there exists $c_\nu'$ such that for each $b \in H_\infty({\rm int}\,S_\nu)$ with $\nu > \omega$, there holds
\begin{align*}
\left\|b_\lap\right\|_{L^2 \to L^2} \leq c_{\nu}' \|b\|_{L^\infty}.
\end{align*}
Here $H_\infty$ denotes the space of bounded holomorphic functions. Moreover, for each multiindex $I$, set $X_I:=\prod_{i \in I}X_i$; then the local Riesz transforms are defined as 
\begin{align}\label{riesz}
\rie_I := X_I \left(\1+\lap\right)^{-|I|/2},\qquad \overline{\rie_I} := \left(\1+\lap\right)^{-|I|/2}X_I.
\end{align}

The constants $C_1$, $C_2$, $c_3$, $C_4$, $C_5$, and $N$ all depend only on the geometry of $(M,g)$. The constants $c_\nu$ and $c_\nu'$ depend in addition on $\nu$.

Note that any closed manifold $(M,g)$ satisfies Assumption~\ref{assumption}. Indeed, it is known that a Riemannian manifold $(M,g)$ with bounded geometry --- that is, $\left\|\na^k{\rm Riem}_g\right\|_{L^\infty} \leq C(k,M,g)<\infty$ for each $k \in \mathbb{N}$, ${\rm Ric}_g > -\infty$, and the injectivity radius is positive --- satisfies (2) and (5). In fact, the Poincar\'{e} inequality holds for any $p \in [1,\infty[$, and (2) is the case $p=1$. See Coulhon--Russ--Tardivel-Nachef \cite{crt}, Triebel \cite{t}, Bernicot--Sire \cite[\S 2.1.4]{bs}. In addition, (1), (3), and the implication (5c) $ \Rightarrow$ (4) follow immediately from compactness.  

\subsection{Paraproduct}

Let $c_0= c_0(M,g)$ be a suitable uniform constant and consider 
\begin{equation*}
\psi(x):=c_0 x^Ne^{-x}(1-e^{-x}), \qquad \phi(x) = -\int_x^\infty \psi(y)\,\dd y
\end{equation*}
as well as
\begin{equation*}
\tpsi(x) = \frac{\psi(x)}{x},\qquad \tphi(x):= -\int_x^\infty \tpsi(y)\,\dd y, \qquad \psi_t(\bullet):= \psi(t\bullet),\qquad \phi_t(\bullet):= \phi(t\bullet).
\end{equation*}
Under Assumption~\ref{assumption}, the construction of \emph{paraproduct} can be generalised to $(M,g,\lap)$. Paradifferential calculus has been an important tool for the analysis of propagation of singularities in nonlinear PDEs over $\R^n$, pioneered by Bony \cite{bony} and Alinhac \cite{a}.

Our presentation of the  theorem below, which is essentially a generalisation of Bony's decomposition, follows Bernicot--Sire \cite[Definition~4.1, Theorem~4.5, and Corollary~5.2]{bs}. 
\begin{theorem} \label{thm: para}
Let $(M,g,\lap)$ be a Riemannian manifold with sub-Laplacian satisfying Assumption~\ref{assumption}. For $f,h: C^\infty_0(M)\to\R$, define the paraproduct of $f$ by $h$ by
\begin{equation}\label{paraprod}
\Pi_h(f) := -\int_0^\infty \tpsi(t\lap) \left[t\lap\tphi(t\lap)f\tphi(t\lap)h \right]\frac{\dd t}{t} - \int_0^\infty \tphi(t\lap)\left[\psi(t\lap)f \tphi(t\lap)h\right]\frac{\dd t}{t}.
\end{equation}
\begin{enumerate}
\item
For $p,q \in ]1,\infty]$  with $0<r^{-1} = p^{-1} + q^{-1}$,  $N > \frac{\log_2 (C_1)}{2}$, and $s \in ]0,2N-4[$, the mapping $$(f,h) \longmapsto \Pi_hf$$ is bounded from $W^{s,p} \times L^q$ into $W^{s,r}$. 
\item
If, in addition, $p \in ]1,\infty[$, $\e>0$, $s>\frac{\log_2(C_1)}{p}$, and $s<\frac{N-2}{2}$, the mapping
\begin{align*}
{\bf Rest}(f,h):= fh- \Pi_fh - \Pi_hf 
\end{align*}
satisfies
\begin{align*}
\left\|{\bf Rest}(f,h)\right\|_{W^{2s-\frac{d}{p},p}} \lesssim \|f\|_{W^{s+\e,p}}\|h\|_{W^{s+\e,p}}.
\end{align*}
\end{enumerate}
\end{theorem}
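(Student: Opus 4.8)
The plan is to follow Bernicot--Sire \cite[\S\S 4--5]{bs} and establish the boundedness of the paraproduct and the remainder through Littlewood--Paley type estimates adapted to the semigroup $e^{-t\lap}$, which under Assumption~\ref{assumption} plays the role of the Fourier frequency truncation. First I would record the functional calculus identities that underpin the splitting \eqref{paraprod}: the ``Calder\'on reproducing formula'' $\int_0^\infty \tpsi(t\lap)\,t\lap\,\tphi(t\lap)\,\frac{\dd t}{t} = \id$ on the relevant $L^p$-closure (a consequence of the bounded $H_\infty$-calculus in Assumption~\ref{assumption}(5b) together with injectivity of $\lap$), and analogous resolutions using $\phi,\psi,\tphi,\tpsi$. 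Then, writing $fh$ via one such resolution of the identity and inserting a second resolution acting on one of the factors, one obtains the trichotomy $fh = \Pi_fh + \Pi_hf + {\bf Rest}(f,h)$ by grouping the double integral according to which of the two ``frequency scales'' $t,t'$ dominates (say $t' \gtrsim t$, $t' \lesssim t$, or $t'\sim t$); the first two groups reassemble into the two paraproduct terms in \eqref{paraprod} and the third is the remainder.

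For part (1), the key analytic inputs are: (i) off-diagonal Gaussian-type decay of the kernels of $\tpsi(t\lap)$, $t\lap\,\tphi(t\lap)$, $\psi(t\lap)$, $\tphi(t\lap)$, which follows from Assumption~\ref{assumption}(5a) and the doubling property via the standard subordination/Cauchy-integral arguments; (ii) the characterisation of the Sobolev space $W^{s,p}$ (for $0<s<2N-4$, say) through the square function $\big(\int_0^\infty |t^{-s/2}\,t\lap\,\tphi(t\lap) f|^2 \frac{\dd t}{t}\big)^{1/2}$, which requires the boundedness of the associated vertical square function on $L^p$ --- this is where Assumption~\ref{assumption}(5c) and (4) (the local Riesz transforms) enter --- and (iii) a pointwise bound $|\tphi(t\lap)h(x)| \lesssim \mathcal{M}h(x)$ by the Hardy--Littlewood maximal function, again from the kernel decay in (i) plus doubling. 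Combining (ii) applied to $\Pi_h f$ with (iii), one reduces to a pointwise domination of the square function of $\Pi_h f$ by $\mathcal{M}h \cdot \big(\int_0^\infty |t^{-s/2}\psi(t\lap)f|^2\frac{\dd t}{t}\big)^{1/2}$; then H\"older in $x$ with exponents $p,q$ and the Fefferman--Stein maximal inequality on $L^q$ give the bound into $W^{s,r}$. The constraint $s>\frac{\log_2(C_1)}{p}$ in part (2) and $N>\frac{\log_2(C_1)}{2}$ in part (1) are exactly the thresholds needed so that the homogeneous-type kernel bounds sum against the doubling dimension $\log_2(C_1)$. For part (2), the remainder ${\bf Rest}(f,h)$ is the ``diagonal'' piece, a bilinear expression of the schematic form $\int_0^\infty \Theta_t\big[(\Lambda_t f)(\Lambda_t h)\big]\frac{\dd t}{t}$ with $\Theta_t$ having cancellation (its symbol vanishes to positive order at $0$) and $\Lambda_t$ bounded on $L^p$ with maximal-function domination; here one gains regularity by exploiting the cancellation of $\Theta_t$ --- it maps into $W^{2s - d/p,p}$ with a gain quantified by the order of vanishing --- and pays with two factors each costing $s+\e$ derivatives, which is the loss reflected in the statement. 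The condition $s<\frac{N-2}{2}$ (resp. $2N-4$) caps how much smoothing can be extracted before the kernel decay degrades.

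The main obstacle I expect is item (ii): setting up a \emph{robust} heat-semigroup characterisation of $W^{s,p}(\Sigma)$ for the fractional, possibly non-integer range of $s$ allowed here, and verifying that the vertical square function $f \mapsto \big(\int_0^\infty |t\lap\,\tphi(t\lap)f|^2\frac{\dd t}{t}\big)^{1/2}$ is bounded on $L^p$ for all $p\in\,]1,\infty[$. On a general manifold this is delicate --- the $L^p$ boundedness of such square functions is equivalent to Littlewood--Paley--Stein theory for $\lap$ --- but it is precisely guaranteed by Assumption~\ref{assumption}(5) (bounded $H_\infty$-calculus plus $L^p$-bounded Riesz transform) together with (4); this is the content of \cite[Theorem~4.5]{bs}, so in our setting, where $(\Sigma,g)$ is a closed surface and Assumption~\ref{assumption} holds automatically, the square-function and Sobolev-characterisation machinery is available off the shelf. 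Consequently the proof reduces to assembling these ingredients exactly as in Bernicot--Sire; I would present the argument by citing \cite[Definition~4.1, Theorem~4.5, Corollary~5.2]{bs} for the structural statements and only spelling out the minor adaptations (the specific choices of $\psi,\phi,\tpsi,\tphi$ above and the bookkeeping of exponents $p,q,r$) needed to land in the stated spaces.
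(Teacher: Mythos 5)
Your proposal is correct and takes the same route as the paper: the paper itself gives no proof of this theorem, stating it as an import from Bernicot--Sire \cite[Definition~4.1, Theorem~4.5, Corollary~5.2]{bs}, and your sketch (Calder\'on reproducing formula from the bounded $H_\infty$-calculus, off-diagonal kernel decay from Assumption~\ref{assumption}(5a), heat-semigroup square-function characterisation of $W^{s,p}$, maximal-function domination, Fefferman--Stein) accurately reconstructs the ingredients of that reference before concluding, as the paper does, by citing it. The only thing worth flagging is that the paper's surrounding Remark explicitly discusses the role and sizing of the parameter $N$ (taken $\gg 1$ on a compact manifold by enlarging $C_4$, $C_5$), a bookkeeping point your sketch glosses over but which does not affect the validity of the argument.
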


\begin{remark}
The parameter $N \in \mathbb{N}$ is taken such that $N \gg 1$. This would validate the choice of $s \in \left]0, \min\left\{2N-4,\frac{N-2}{2}\right\}\right[$, for which both statements in Theorem~\ref{thm: para} hold, as well as the requirement that $N > \frac{\log_2 (C_1)}{2}$. We are uncertain if in \cite{bs} this parameter must be the same $N$ as in \eqref{N} and Assumption~\ref{assumption}, (5a). But, for our purpose, on a compact manifold $(M,g)$ we can take $N \gg 1$ by enlarging $C_4(M,g)$ and $C_5(M,g)$ in \eqref{N} and Assumption~\ref{assumption}, (5a).

The number $\log_2(C_1)$ is the homogeneous dimension of $(M,g)$; here $C_1$ is the (uniform) doubling constant in Assumption~\ref{assumption} (1).
\end{remark}


On a closed manifold $(M,g)$ we may construct the paraproduct via the arguments in Shnirelman \cite[Equation(2.2)]{s}, which are taken from \cite{s0} by the same author:
\begin{align*}
f h &\approx D^{-k}\na^k (fh)\\
&=\underbrace{ D^{-k}(h \na^kf)}_{=:\,\Pi_hf} + \underbrace{D^{-k}(f\na^kh)}_{=:\,\Pi_fh} + \underbrace{D^{-k}(k\na h\cdot \na^{k-1}f + \ldots + k\na f \cdot \na^{k-1}h)}_{=:\,{\bf Rest}(f,h)}.
\end{align*} 
Here $D^{-k}$ can be rigorously defined via (local) Riesz transforms. In \cite{s} $\Pi_hf$ is denoted as $T_hf$; here we choose the former notation to avoid confusion with the tangent bundle $T\Sigma$.

For any $k \in \mathbb{N}$ let us consider $X_I = \prod_{i \in I}X_i$, where the multiindex $I$ has valency $|I|=k$. Recall the sub-Laplacian $\lap = -\sum_{i=1}^K X_i^2$. We obtain by Assumption~\ref{assumption} (4), the definition of local Riesz transform in \eqref{riesz}, and the Leibniz rule, that
\begin{align}\label{paraproduct}
fh&\approx \overline{\rie_I} (fh)\nonumber\\
&= \left(\1 + \lap\right)^{-\frac{|I|}{2}} X_I(fh)\nonumber\\
&=\underbrace{ \left(\1 + \lap\right)^{-\frac{|I|}{2}}(h X_If)}_{=:\,\Pi_hf} + \underbrace{\left(\1 + \lap\right)^{-\frac{|I|}{2}}(f X_I h)}_{=:\,\Pi_fh} \nonumber\\
&\qquad + \underbrace{\left(\1 + \lap\right)^{-\frac{|I|}{2}}\left( \sum_{J+K=I,\,1 \leq |J| \leq |I|-1} X_Jf\,X_Kh \right) }_{=:\,{\bf Rest}(f,h)}.
\end{align}
On closed manifold $(M,g)$ we may take $\lap = -\Delta_g$. This is the Hodge Laplacian with $\lap = \sum_{i=1}^{\dim M}\na_{i}\na_{i}$ in local co-ordinates. For $h \in L^\infty(M;\R)$ and $f \in {\bf H}^s(M;\R)$, the first line means that $fh - \overline{\mathcal{R}_I } (fh)\in {\bf H}^\sigma(M;\R)$ for some $\sigma > s$.

\subsection{Paracomposition}
As another tool for paradifferential calculus, we can define the \emph{paracomposition} on $(M,g,\lap)$ based on the paraproduct. For $\varpi \in  \diff(M)$ and $f:M\to\R$, consider
\begin{align}\label{composition}
f\circ\varpi &\approx \overline{\rie_I} (f \circ \varpi) \nonumber\\
&= \left(\1 + \lap\right)^{-\frac{|I|}{2}} X_I(f\circ\varpi).
\end{align}
To be explicit, we first compute for a single vectorfield $X\in\G(TM)$ that 
\begin{align*}
X(f\circ\varpi) &= \dd(f\circ\varpi)(X)\\ &= \left(\dd f\circ\varpi\right) \left[\dd\varpi(X)\right]= \bra \dd\omega(X), \na f\circ\omega \ket,
\end{align*}
where $\bra\bullet,\bullet\ket = g(\bullet,\bullet)$. Taking one further derivative with respect to $Y \in \G(TM)$, we have
\begin{align*}
YX(f \circ \varpi) &= Y\bra \dd\varpi(X), \na f\circ\varpi \ket\\
&= \bra \left(\na_Y \dd\varpi\right)(X) + \dd\varpi\left(\na_YX\right),\,\na f\circ\varpi\ket + \left[\na\na f\circ\varpi\right]\left(\dd\varpi(X),\,\dd\varpi(Y)\right).
\end{align*}
Here $\na\na f \in \G(T^*M \otimes T^*M)$ is given by $\na\na f(Z,W):=\na_Z\na_W f$. A simple induction shows that for any multiindex $I=(i_1, \ldots, i_k)$ one has
\begin{align}\label{composition2}
X_I(f\circ\varpi) &= \bra X_{i_k}X_{i_{k-1}} \cdots X_{i_2}\dd\varpi(X_{i_1}),\na f\circ\varpi\ket \nonumber\\
&\quad + \left[\left(\overbrace{\na \circ \cdots\circ \na}^{k \,\text{times}} f\right) \circ \varpi \right] \left( \dd\varpi(X_{i_1}),\,\dd\varpi(X_{i_2}),\,\cdots,\,\dd\varpi(X_{i_k})\right) + \overline{\rm Rem}.
\end{align}
We denote by $\overbrace{\na \circ \cdots\circ \na}^{k \,\text{times}} f =: \na^kf \in \G\left({T^*M}^{\otimes\,k}\right)$ the map $\na^kf(Z_1, \ldots, Z_k) := \na_{Z_k}\na_{Z_{k-1}} \cdots \na_{Z_1} f$ for any $Z_1, \ldots, Z_k \in \G(TM)$. The remainder $\overline{\rm Rem}$ consists of those terms containing no more than $(k-1)$ derivatives on both $\varpi$ and $f$; \emph{i.e.}, those involving $\na_{X_k}X_\ell$ for some $X_k, X_\ell \in X_I$. 

\begin{definition}\label{def: paracomp} Let $(M,g,\lap)$ be a Riemannian manifold with sub-Laplacian satisfying Assumption~\ref{assumption}, $\varpi \in \diff(M)$, and $f:M\to\R$. The   paracomposition of $f$ with $\varpi$ is defined as 
\begin{align*}
\KK_\varpi f := (\1+\lap)^{-\frac{|I|}{2}} \left[\na^k f \circ \varpi\right]\left( \dd\varpi(X_{i_1}),\,\dd\varpi(X_{i_2}),\,\cdots,\,\dd\varpi(X_{i_k})\right). 
\end{align*}
\end{definition}

Note that the paracomposition $\KK_\varpi f$ 
carries only the singularities of $f$.

By \eqref{composition} and \eqref{composition2} we have
\begin{align}\label{paracomposition}
f \circ \varpi &\approx \KK_\varpi f + (\1+\lap)^{-\frac{|I|}{2}}\bra X_{i_k}X_{i_{k-1}} \cdots X_{i_2}\dd\varpi(X_{i_1}),\na f\circ\varpi\ket \nonumber\\
&\qquad + (\1+\lap)^{-\frac{|I|}{2}}\left\{\overline{\rm Rem}\right\}.
\end{align}
The second term on the right-hand side of \eqref{paracomposition} can be simplified as follows. Observe that the diffeomorphism $\varpi \in C^\infty\diff(M)$ can be regarded as a map from $M$ into $\R^m$, if $(M,g)$ embeds isometrically into $\R^m$ by Nash's embedding \cite{n}. Then, view $\varpi = \left(\varpi^1, \ldots, \varpi^m\right)^\top$ and identify $f$ with an arbitrary Lipschitz extension $f: \R^m \to \R$, without relabelling. In this way
\begin{align*}
\bra X_{i_k}X_{i_{k-1}} \cdots X_{i_2}\dd\varpi(X_{i_1}),\na f\circ\varpi\ket &= \sum_{j=1}^m \left(D_jf  \circ \varpi\right)\left(X_{i_k}X_{i_{k-1}} \cdots X_{i_2}X_{i_1}\varpi^j\right)\\
&\equiv \sum_{j=1}^m \left(D_jf  \circ \varpi\right) \left(X_I\varpi^j\right),
\end{align*} 
where $D$ is the Euclidean gradient on $\R^m$. Taking $\left(\1 + \lap\right)^{-\frac{|I|}{2}}$ on both sides and recalling the paraproduct \eqref{paraproduct}, we can rephrase \eqref{paracomposition}
 as
 \begin{equation}\label{paracomposition, simplified}
 f\circ \varpi \approx \KK_\varpi f + \sum_{j=1}^m \Pi_{D_j f\circ\varpi} \left(\varpi^j\right) + (\1+\lap)^{-\frac{|I|}{2}}\left\{\overline{\rm Rem}\right\},
 \end{equation}
with $\overline{\rm Rem}$ containing $\leq (k-1)$ derivatives on $\varpi$ and $f$.

For our applications in the subsequent developments, we shall apply the  construction and  corresponding estimates for the paracomposition in the following form.

\begin{corollary}\label{cor: paracomposition}
Let $(\Sigma,g)$ be a smooth, oriented, closed surface smoothly isometrically embedded in $\R^m$. Let $\varpi \in \hsdiff(\Sigma)$ and $f \in \mathbf{H}^\ell(\Sigma;\R)$ with $s>2$ and $\ell>1$. Then
\begin{align*}
 f\circ \varpi - \KK_\varpi f - \sum_{j=1}^m \Pi_{D_j f\circ\varpi} \left(\varpi^j\right) \in {\bf H}^\varrho(\Sigma) \qquad \text{for some } \varrho > \ell.
\end{align*}
\end{corollary}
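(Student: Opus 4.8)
The statement is precisely the identity \eqref{paracomposition, simplified}, read with the symbol ``$\approx$'' meaning equality modulo ${\bf H}^\varrho(\Sigma)$ for some $\varrho>\ell$; the plan is therefore to quantify each of the approximate equalities that enter the derivation of \eqref{paracomposition, simplified}. Fix a multiindex $I$ of valency $k:=|I|$, which we may take even and suitably large (depending on $s$ and $\ell$), and take $\lap=-\Delta_g$. It is convenient to work in the regime $\ell\le s$ relevant to our applications, so that $f\circ\varpi\in{\bf H}^\ell(\Sigma)$ by the standard composition estimate for Sobolev functions with an ${\bf H}^s$-diffeomorphism — valid because $s>2$ makes $\varpi$ of class $C^1$ with Jacobian bounded away from $0$ and $\infty$.

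First I would make the passage $f\circ\varpi\approx\overline{\rie_I}(f\circ\varpi)$ precise. Because $\lap^{k/2}$ (a differential operator, as $k$ is even) differs from $\sum_{|J|=k}c_J X_J$ for suitable constants $c_J$ by an operator of order $\le k-1$, and $(\1+\lap)^{k/2}-\lap^{k/2}$ likewise has order $\le k-1$, applying $(\1+\lap)^{-k/2}$ to $u=f\circ\varpi\in{\bf H}^\ell(\Sigma)$ yields
\begin{equation*}
f\circ\varpi\;\equiv\;\sum_{|J|=k}c_J\,(\1+\lap)^{-k/2}X_J(f\circ\varpi)\qquad\text{modulo }{\bf H}^{\ell+1}(\Sigma).
\end{equation*}
Thus it suffices to treat $(\1+\lap)^{-k/2}X_J(f\circ\varpi)$ for a single $J$ with $|J|=k$. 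Expanding $X_J(f\circ\varpi)$ by the iterated chain rule \eqref{composition2} (interpreted by density from the smooth case) and applying $(\1+\lap)^{-k/2}$: the ``top $f$-term'' $[(\na^k f)\circ\varpi](\dd\varpi(X_{j_1}),\dots,\dd\varpi(X_{j_k}))$ becomes exactly $\KK_\varpi f$ by Definition~\ref{def: paracomp}, and the ``top $\varpi$-term'' $\bra X_{j_k}\cdots X_{j_2}\dd\varpi(X_{j_1}),\na f\circ\varpi\ket=\sum_{j=1}^m(D_j f\circ\varpi)(X_J\varpi^j)$ (the identity recorded just before \eqref{paracomposition, simplified}) becomes exactly $\sum_{j=1}^m\Pi_{D_j f\circ\varpi}(\varpi^j)$ by the definition \eqref{paraproduct} of the paraproduct. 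So, beyond this reduction, the only remaining error is $(\1+\lap)^{-k/2}\{\overline{\rm Rem}\}$.

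It remains to show $(\1+\lap)^{-k/2}\{\overline{\rm Rem}\}\in{\bf H}^\varrho(\Sigma)$ for some $\varrho>\ell$. The point is that $\overline{\rm Rem}$ is genuinely of lower order than a generic $|J|$-th derivative of $f\circ\varpi$: each of its summands is a product of some $\na^a f\circ\varpi$ ($0\le a\le k-1$) with finitely many further derivatives of $\varpi$, in which the number of derivatives falling on $f$ and on $\varpi$ together is $\le k-1$, since every summand carries at least one Christoffel contraction $\na_X X'$ that costs derivatives while producing none. Using $\varpi\in{\bf H}^s$ with $s>2$ (so $X_B\varpi\in{\bf H}^{s-|B|}$ and $D\varpi\in{\bf H}^{s-1}\emb L^\infty$) together with the composition estimate $\na^a f\circ\varpi\in{\bf H}^{\ell-a}$, and feeding these through the Bony decomposition with the paraproduct and remainder bounds of Theorem~\ref{thm: para} — equivalently the Moser/Kato--Ponce product inequalities — one obtains $\overline{\rm Rem}\in{\bf H}^{m_0}$ with $m_0>\ell-k$. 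Since $(\1+\lap)^{-k/2}$ smooths by order $k$, this places $(\1+\lap)^{-k/2}\{\overline{\rm Rem}\}$ strictly inside ${\bf H}^\ell(\Sigma)$; combined with the reduction above, this gives the claim with some $\varrho>\ell$.

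I expect the last paragraph to be the main obstacle: keeping the paradifferential bookkeeping honest. The dangerous summands of $\overline{\rm Rem}$ contain $\na^a f\circ\varpi$ with $a$ close to $k-1$, a distribution of very negative Sobolev order $\ell-a$, possibly multiplied by further negative-order derivatives of $\varpi$; one must verify that the composition estimate $\na^a f\circ\varpi\in{\bf H}^{\ell-a}$ (or a substitute obtained by applying the paracomposition once more to $\na^a f$) persists at negative regularity — this is exactly where $\varpi\in\hsdiff(\Sigma)$, $s>2$ (hence bi-Lipschitz and $C^1$ with a two-sided Jacobian bound) is essential — and that the products close with the exact derivative count of \eqref{composition2}, so that the $k$ derivatives of smoothing from $(\1+\lap)^{-k/2}$ strictly beat the $\le k-1$ derivatives that $\overline{\rm Rem}$ loses. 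The size of $k$ one needs is then dictated by this bookkeeping and by the structural restrictions ($s>\log_2(C_1)/p$, $s<(N-2)/2$, etc.) under which the estimates of Theorem~\ref{thm: para} are available.
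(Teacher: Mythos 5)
Your reconstruction is precisely the argument that the paper compresses into a one-line citation of the $H^\varrho\to H^\varrho$ boundedness of the local Riesz transform, the identity \eqref{paracomposition, simplified}, Definition~\ref{def: paracomp}, and Theorem~\ref{thm: para}; your replacement of the Riesz-transform invocation by the elementary comparison of $\lap^{k/2}$ with $\sum_{|J|=k}c_J X_J$ modulo lower order is a repackaging of the same fact. Your closing caveat --- that the composition estimate for $\na^a f\circ\varpi$ must persist at negative Sobolev order --- singles out the one genuinely delicate point, which the paper's own proof also leaves implicit, so your version is at least as careful as the source.
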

\begin{proof}
It follows directly from the $H^\varrho \to H^\varrho$ boundedness of  local Riesz transform in \eqref{composition}, the expression~\eqref{paracomposition, simplified}, Definition~\ref{def: paracomp} of paracomposition, and  Theorem~\ref{thm: para} for paraproduct.  \end{proof}

\begin{remark}\label{rem: isom emb and para}
In the case that $(M,g)$ is a closed Riemannian manifold, our constructions above for paraproduct and paracomposition are intrinsic. Thus, if $(M,g)$ is smoothly isometrically embedded into some Euclidean domain $\R^m$, then they agree with usual paradifferential calculus constructions on $\R^m$, with suitable restrictions and pullbacks to (the embedded image of) $\Sigma$. 
\end{remark}

\section{Biot--Savart operator}\label{section: BS}

Let $v \in \G(T\Sigma)$ be a divergence-free vectorfield. Define its vorticity as
\begin{equation}\label{vorticity}
\omega := \star \left[\dd\left(v^\sharp\right)\right],
\end{equation} 
with the Hodge star $\star: \Omega^2(\Sigma) \to \Omega^0(\Sigma)$. See Appendix \ref{sec: appendix} for details. We shall also write \eqref{vorticity} more compactly as $\omega = \star \dd v^\sharp$, and shall refer to the mapping $v \mapsto \omega$ as the curl or rot operator.

\subsection{Vorticity is transported}

As with the Euclidean flat case, the vorticity will never aggregate or deplete for any time:
\begin{lemma}\label{lem: vorticity}
Assume that $v \in \G(T\Sigma)$ satisfies the Euler equation. Its vorticity $\omega$ is transported along the Lagrangian trajectories. That is,
\begin{align*}
\p_t \omega + \na_v \omega = 0\qquad \text{ on } [0,\infty[ \times \Sigma.
\end{align*}
\end{lemma}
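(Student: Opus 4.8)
\noindent\emph{Proof plan.} The plan is to translate everything into the language of differential forms and to exploit that $\dd$ annihilates gradients. Write $u := v^\sharp \in \Omega^1(\Sigma)$ for the $1$-form metrically dual to $v$, so that $\omega = \star\,\dd u$ by \eqref{vorticity}. First I would lower the index in the Euler equation: a one-line computation using $u(X)=g(v,X)$ and metric-compatibility of the Levi-Civita connection gives $\na_v u = (\na_v v)^\sharp$, and since $(\na p)^\sharp = \dd p$, equation \eqref{euler} becomes
\begin{equation*}
\p_t u + \na_v u + \dd p = 0 .
\end{equation*}

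Next I would convert the covariant term into a Lie derivative. Combining Cartan's formula $\mathcal{L}_v u = \iota_v \dd u + \dd(\iota_v u)$, the identity $\iota_v u = u(v) = |v|^2$, and the torsion-free expression $\dd u(X,Y) = (\na_X u)(Y) - (\na_Y u)(X)$, one gets $\iota_v \dd u = \na_v u - \tfrac12\,\dd|v|^2$, hence $\mathcal{L}_v u = \na_v u + \tfrac12\,\dd|v|^2$. Substituting into the previous display, the Euler equation takes the form
\begin{equation*}
\p_t u + \mathcal{L}_v u = \dd\!\left(\tfrac12 |v|^2 - p\right) .
\end{equation*}

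Then I would apply the exterior derivative to both sides. Since $\dd$ commutes with $\p_t$ and, by Cartan's formula, with $\mathcal{L}_v$, while $\dd\circ\dd = 0$ kills the Bernoulli/pressure term, we are left with the conservation law at the level of the vorticity $2$-form,
\begin{equation*}
\p_t(\dd u) + \mathcal{L}_v(\dd u) = 0 .
\end{equation*}
Finally, on the orientable surface $\Sigma$ we may write $\dd u = \omega\,\dvg$ (apply $\star$ to $\omega = \star\,\dd u$ and use $\star\star = \id$ on $2$-forms in dimension two). The Leibniz rule for the Lie derivative yields $\mathcal{L}_v(\omega\,\dvg) = (\mathcal{L}_v\omega)\,\dvg + \omega\,\mathcal{L}_v(\dvg) = (\na_v\omega)\,\dvg$, because $\mathcal{L}_v\omega = v(\omega) = \na_v\omega$ for a scalar and $\mathcal{L}_v(\dvg) = ({\rm div}\, v)\,\dvg = 0$ by incompressibility. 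Since $\p_t(\omega\,\dvg) = (\p_t\omega)\,\dvg$ and $\dvg$ is nowhere vanishing, dividing out $\dvg$ gives $\p_t\omega + \na_v\omega = 0$, as claimed.

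The argument is elementary, and I expect no genuine obstacle: this is the surface analogue of the classical two-dimensional fact that vorticity is materially transported. The only points demanding care are the two musical/Cartan identities of the first two steps and the Hodge-star sign conventions on the $2$-manifold (cf. Appendix~\ref{sec: appendix}), together with the observation that divergence-freeness of $v$ is used exactly once, and crucially, in the final step — which is precisely where the volume-preserving nature of the flow enters.
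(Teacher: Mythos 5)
Your argument is correct, but it is a genuinely different route from the paper's. You work entirely with Cartan calculus: lower the index, rewrite $\na_v u$ as $\mathcal{L}_v u - \tfrac12\,\dd|v|^2$, apply $\dd$ to kill the exact (Bernoulli/pressure) term, obtain the transport law $\p_t(\dd u) + \mathcal{L}_v(\dd u) = 0$ for the vorticity $2$-form, and only then specialise to dimension two by writing $\dd u = \omega\,\dvg$ and invoking $\mathcal{L}_v\dvg = ({\rm div}\,v)\,\dvg = 0$. The paper instead computes in local coordinates with the almost complex structure $\J$: it writes $\omega = \J^\alpha_\beta\na_\alpha v^\beta$, commutes covariant derivatives so that a Riemann curvature term appears, observes that this term vanishes on a surface because it reduces to $K\,\J_{\gamma\delta}v^\gamma v^\delta = 0$ by antisymmetry of $\J$, and identifies the remaining quadratic term as $({\rm div}\,v)\,\omega$. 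Your approach buys coordinate-freeness and generality — the $2$-form identity $\p_t(\dd u)+\mathcal{L}_v(\dd u)=0$ holds in any dimension, curvature never enters, and the role of incompressibility is isolated cleanly in the last step — whereas the paper's computation sets up the $\J$-formalism for the curl operator that it reuses later (e.g.\ in the principal-symbol calculations for the Biot--Savart operator and for $\widetilde{\mathcal{B}}$), so the local computation is not wasted effort there. Both proofs use the divergence-free condition exactly once, at the end; your checks of the musical and Cartan identities and of $\star\star=\id$ on $2$-forms in dimension two are the only delicate points, and they are handled correctly.
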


\begin{proof}
We make use of a nice expression for the curl/rot operator on surfaces; see also Samavaki--Tuomela \cite[p.11 and Appendix B]{st}. Let $\J \in \G(T^*\Sigma \otimes T\Sigma)$ be the almost complex structure arising from the Riemannian metric $g$. In local co-ordinates, $\J = \dd x^1 \otimes e_2 - \dd x^2 \otimes e_1$. Then for each vectorfield $u \in \G(T\Sigma)$ one has 
\begin{align*}
\star \left[ \dd\left( u^\sharp\right)\right] = \J^\alpha_\beta \na_\alpha u^\beta,
\end{align*}
where $\na$ is the Levi-Civita connection on $\Sigma$. Note also that $\na \J =0$.

Let us take $\sharp$, $\dd$, and $\star$ to the Euler equation $$\p_t v +\na_v v + \na p=0$$ in this given order. As $\na$ is dual to $\dd$ and $\dd^2=0$, the pressure term is eliminated. We obtain from the definition of vorticity in \eqref{vorticity} that
\begin{align}\label{x1}
\p_t \omega + \J^\alpha_\beta \na_\alpha \left(\na_v v\right)^\beta = 0.
\end{align}

To proceed, we first note that $$\left(\na_v v\right)^\beta = \na_v \left(v^\beta\right),$$ as both sides are equal to $v^\gamma \p_\gamma v^\beta + v^\gamma v^\delta \G^\beta_{\gamma\delta}$, where $\G^\beta_{\gamma\delta}$ are the Christoffel symbols for $\na$. Then 
\begin{align*}
\J^\alpha_\beta \na_\alpha \left(\na_v v\right)^\beta &= \J^\alpha_\beta \left(\na_\alpha v^\gamma\right)\left(\na_\gamma v^\beta\right) +   \J^\alpha_\beta v^\gamma \na_\alpha \na_\gamma v^\beta\\
&= \J^\alpha_\beta \left(\na_\alpha v^\gamma\right)\left(\na_\gamma v^\beta\right) +   \J^\alpha_\beta v^\gamma \na_\gamma\na_\alpha  v^\beta - \J^\alpha_\beta v^\gamma v^\delta {\rm Riem}^\beta_{\gamma\alpha\delta},
\end{align*}
thanks to the definition of Riemann curvature tensor ${\rm Riem}^\beta_{\gamma\alpha\delta}$. On 2D surface the last term vanishes, as $\J^\alpha_\beta v^\gamma v^\delta {\rm Riem}^\beta_{\gamma\alpha\delta} = K\J_{\gamma\delta} v^\gamma v^\delta = 0$. Here $K$ is the Gauss curvature of $(\Sigma,g)$, and the above identity holds by the antisymmetry of $\J$. The second term satisfies
\begin{align*}
  \J^\alpha_\beta v^\gamma \na_\gamma\na_\alpha  v^\beta = v^\gamma \na_\gamma \left(\J^\alpha_\beta \na_\alpha v^\beta\right) =  v^\gamma \na_\gamma \omega = \na_v\omega.
\end{align*} 
Finally, for the first term on the right-hand side, we compute directly to get
\begin{align*}
\J^\alpha_\beta \left(\na_\alpha v^\gamma\right)\left(\na_\gamma v^\beta\right) &= \left(\na_1 v^1\right) \left(\na_1 v^2\right) +  \left(\na_1 v^2\right) \left(\na_2v^2\right) -  \left(\na_2 v^1\right) \left(\na_1 v^1\right) -  \left(\na_2 v^2\right) \left(\na_2 v^1\right)\\
&=\left[ \left(\na_1 v^1\right)+ \left(\na_2 v^2\right)\right]\left[ \left(\na_1 v^2\right)- \left(\na_2 v^1\right)\right]\\
&\equiv \left({\rm div}\, v \right) \omega.
\end{align*}

In this way, \eqref{x1} becomes $\p_t\omega + \left({\rm div}\, v \right) \omega + \na_v\omega = 0.$ So the proof is  complete by the divergence-free condition for $v$.  \end{proof}

\subsection{The Biot--Savart operator}

Now let us discuss how to invert the curl/rot operator on closed orientable surface $(\Sigma, g)$. That is, we look for an operator 
\begin{equation*}
\sol: C^\infty(\Sigma,\R) = \Omega^0(\G) \,\longrightarrow \, \G(T\Sigma)
\end{equation*}
defined by
\begin{equation}\label{biot-savart}
\sol(\omega) = v \qquad \text{such that } \star \dd v^\sharp = \omega \text{ and } {\rm div}\,v = 0.
\end{equation}
The solution for $v$ is in general non-unique. To make the solution operator $\sol$ well defined, for initial data $v_0 \in \G(T\Sigma)$ we further require that
\begin{equation}\label{biot-savart, cohomol}
\text{$v$ is cohomologous to $v_0$ where ${\rm div}\,v_0=0$.}
\end{equation}
We call $\sol$ specified by \eqref{biot-savart} $\&$ \eqref{biot-savart, cohomol} the Biot--Savart operator on $(\Sigma,g)$. It is also widely denoted as $\sol = {\rm rot}^{-1}$ in the literature; see \cite{s} and the references cited therein. 

To determine $\sol$, we take $\star\dd$ to the vorticity in \eqref{vorticity} to obtain
\begin{align*}
\star \dd \omega = \star \dd \star \dd v^\sharp = \dd^*\dd v^\sharp = \Delta_g v^\sharp. 
\end{align*}
The last equality follows from the definition of Laplace--Beltrami operator $$\Delta_g = \dd \dd^* +\dd^*\dd$$ and that $v$ is divergence-free (\emph{i.e.}, $\dd^*v^\sharp  =0$). Thus we obtain the following expression for $\sol$:
\begin{equation}\label{biot savart, expression}
\sol \omega = \left[ \Delta_g^{-1} \left(\star \dd \omega\right) \right]^\flat,
\end{equation}
where $\Delta_g^{-1}$ is the inverse of Laplace--Beltrami operator acting on $\Omega^1(\Sigma)$ and restricted to the cohomology class of $(v_0)^\sharp$.

\begin{example}
For  $\Sigma = \tor$, the Biot--Savart operator can be expressed explicitly as follows on the Fourier side; here $v_0=0$. 
\begin{align*}
\widehat{\sol\omega}(\xi) = -\2 \frac{\widehat{\omega}(\xi) \begin{bmatrix}
 0 &-1\\
 1&0
\end{bmatrix}\xi}{|\xi|^2}.
\end{align*}
See \cite[Equation~(3.6)]{s}.
\end{example}
For a general closed orientable surface $(\Sigma,g)$, we make use of Green's operators to find $\sol$. See, for instance, the classical work of de Rham \cite[p.134, Chapter V, Theorem~23]{dr}. There exist linear operators $\HH$ and $\GG$ on $\Omega^1(\Sigma)$, such that $\HH$ is the projection onto $\harm$, the space of harmonic $1$-forms on $\Omega$, and $\GG$ satisfies $$\GG\Delta_g = \Delta_g\GG = \1 - \HH.$$ Both $\HH$ and $\GG$ commute with $\dd$, $\dd^*$, and $\star$. The projection $\HH$ has a $C^\infty$-kernel, and the kernel $G(x,y): \Sigma \times \Sigma \to \R$ for $\GG$ satisfies is smooth off-diagonal and of order $\mathcal{O}(|\log r|)$ for $r = d_g(x,y)$. In our setting, since $v^\sharp$ is cohomologous to $v_0^\sharp$ and they are both $\dd^*$-free, so $$v^\sharp - v_0^\sharp\in\harm.$$ Thus we have
\begin{align*}
\GG(\star \dd \omega) = ( \1-\HH) v^\sharp = v^\sharp - \HH v_0^\sharp,
\end{align*}
which leads to
\begin{equation}\label{sol, expression}
v = \sol \omega = \left(\GG\left(\star \dd \omega\right) + \HH v_0^\sharp\right)^\flat.
\end{equation}

Let us determine the principal symbol of $\sol$. It is well-known that the symbol for $\dd$ is 
\begin{equation}\label{d symbol}
\sigma(\dd) = 2\pi\2\xi\wedge\bullet
\end{equation}
on any differentiable manifold. When specialising to  compact surface, we denote by $\J$ the almost complex structure on $(\Sigma,g)$, where $(\Sigma,g)$ is viewed as a complex manifold. Then the principal symbol for $\star \dd$ is equal to (the left-multiplication by) $2\pi\2 \J\xi$. On the other hand, it is well-known that the principal symbol is multiplicative:
\begin{equation}\label{ppl}
\ppl(AB) = \ppl(A)\, \ppl(B)
\end{equation}
for pseudo-differential operators between vector bundles over a given compact manifold. See, \emph{e.g.}, \cite[p.84, Proposition~3.1.6]{ni}. In our case the bundle is $T^*\Sigma$, and the multiplication on the right-hand side of \eqref{ppl} is taken with respect to the metric. Recall that Green's operator $\GG$ is a parametrix for the Laplace--Beltrami $\Delta_g$:
\begin{align*}
\GG\circ\Delta_g = \Delta_g \circ \GG = \1 +\HH.
\end{align*}
But the projection $\HH: \Omega^1(\Sigma) \to \harm$ is smooth; \emph{i.e.}, of class $\Psi^{-\infty}$ as a pseudo-differential operator. Hence
\begin{equation*}
\ppl(\GG)(x,\xi) = \frac{1}{\ppl(\Delta_g)(x,\xi)} = \frac{-1}{(2\pi)^2 |\xi|_g^2}\qquad \text{for all } (x,\xi) \in T^*\Sigma.
\end{equation*}
By virtue of \eqref{sol, expression}, we obtain by applying \eqref{ppl} and $\HH \in \Psi^{-\infty}$ once more that
\begin{equation}\label{symbol for Biot-Savart}
\ppl(\sol)(x,\xi) = -\frac{\2}{2\pi} \frac{\J\xi}{|\xi|_g^2} \qquad \text{for all } (x,\xi) \in T^*\Sigma.
\end{equation}
Here and hereafter, we view $\J$ as a section of the endomorphism bundle ${\rm End}(T^*\Sigma)$ instead of a $(1,1)$-tensor through the duality ${\rm End}(T^*\Sigma) \cong T^*\Sigma \otimes T\Sigma$.

As we have fixed the cohomology class, the $\Delta_g$ is invertible with inverse $\sol$, hence its Fredholm index is zero.

\section{Proof of Fredholm quasiruledness}\label{section: proof}

\subsection{The quantity $W$}

With the toolbox of paradifferential calculus on $(\Sigma,g)$ in  \S\ref{section: paradiff} at hand, we  prove Theorem~\ref{thm} by adapting the arguments in  \cite[\S 3]{s}. When $\Sigma = \tor$, Shnirelman \cite{s} considered the quantity 
\begin{align}\label{W, flat}
W(\varphi)(x,t) := \Pi_{\dd\left(\varphi_t^{-1}\right)} \left(\varphi_t - \1\right)(x),
\end{align}
where $\varphi_t$ is the integral flow of the Eulerian velocity $v$. 
This $W$ is a good quantity with suitably controlled divergence and curl. One recovers $\varphi_t$ from the curl of $W$ via the Biot--Savart operator. 

\begin{remark}\label{rem: vector product}
Here and hereafter, for matrix-valued function $A = \left\{A^i_j\right\}_{q \leq i \leq \ell;\,1 \leq j \leq m}$ and  vector-valued function $V = \left(V^1,\ldots, V^m\right)^\top$, the paraproduct $\Pi_AV$ is the vector-valued function given by $
\left(\Pi_AV\right)^i := \sum_{j=1}^m \Pi_{A^{i}_j} V^j$ for each $i \in \{1,2,\ldots,\ell\}$. 
\end{remark}

We look for an analogue of $W$ in \eqref{W, flat} on a generic surface $(\Sigma,g)$. Note, however, that $\dd\varphi_t^{-1} \in \End(T\Sigma)$ and $\varphi_t \in \sdiff(\Sigma,g)$, so the paraproduct $\Pi_{\dd\varphi_t^{-1}}\varphi_t$ does not make sense unless $\Sigma$ is flat. We overcome this issue by first embedding $(\Sigma,g)$ isometrically into $\R^m$, and then extending $\varphi_t$ to a map $\R^m \to \R^m$ by Whitney extension.

\begin{definition}\label{definition: whitney}
Given a smooth, connected, orientable closed surface $(\Sigma,g)$  and volume-preserving diffeomorphisms $\{\varphi_t\} \subset \hsdiff(\Sigma,g)$ for $s >2$. Let $\iota: (\Sigma,g) \emb \R^m$ be a $C^\infty$-isometric embedding for some $m \geq 3$. Note that $\iota \circ \varphi_t: \Sigma \to \R^m$ is an vector-valued $H^s$-function. 
\begin{itemize}
\item
Take $\Phi_t: \R^m \to \R^m$ to be any $H^s$-extension of $\iota \circ \varphi_t$.
\item
Put $\p\Phi_t(z)\slash \p t = V(\Phi_t(z))$ for each $z\in \R^m$. 
\item
Take $\Psi_t : \R^m \to \R^m$ to be any $H^s$-extension of $\iota \circ \varphi_t^{-1}$. 
\item
Set \begin{equation}\label{W, surface}
W_t := \Pi_{D\Psi_t} \Phi_t: \R^m \longrightarrow \R^m.
\end{equation}
\end{itemize}
\end{definition}

The existence of $\iota$ follows from Nash's $C^k$-isometric embedding theorem \cite{n}, and the existence of $\Phi_t$ and $\Psi$ follows from the Besov (hence Sobolev) version of the Whitney extension theorem; see Jonsson--Wallis  \cite{jw}. Here $\Phi_t$ and $\Psi_t$ are $H^s$-vectorfields on $\R^m$ such that 
\begin{align*}
\Phi_t \circ \iota = \iota \circ \varphi_t\quad \text{ and } \quad \Psi_t \circ \iota = \iota \circ \varphi_t^{-1}\quad \text{ almost everywhere on $\Sigma$}.
\end{align*}
The notion of almost everywhere is understood with respect to $\dvg$. The Riemannian volume measure $\dvg$ is Ahlfors 2-regular and supported in $\Sigma$, embedded via $\iota$ as a closed set in $\R^m$; hence, the assumptions for \cite[p.146, Main Theorem]{jw} are verified. One may consider $\iota \circ \varphi_t = \left[ (\iota \circ \varphi_t)^1, \ldots, (\iota \circ \varphi_t)^m \right]^\top$ and apply to each component the Whitney extension to obtain $\Phi_t$, and similarly for $\Psi_t$. In Euclidean co-ordinates $$(W_t)^j =\sum_{i=1}^m \Pi_{D_i (\Psi_t)^j} (\Phi_t)^i \qquad \text{ for each $j \in \{1,2,\ldots,m\}$},$$ thanks to Remark~\ref{rem: vector product}. In addition, one has
\begin{equation}\label{V and v}
V \circ \dd\iota = \dd \iota \circ v \qquad \text{ on } \G(T\Sigma).
\end{equation}

\subsection{Time derivative of $W$} 
The following is an adaptation of the arguments on \cite[pp.S393--S394]{s}. As a caveat, we remark that $\Psi_t = \Phi_t^{-1}$ is not required to hold on the whole space $\R^m$, which maybe overdetermined in general. We have only defined $\Phi_t$ to be an extension of $\varphi_t$ and $\Psi_t$ an extension of $\varphi_t^{-1}$. That is, $\Psi_t = \Phi_t^{-1}$ is only imposed on $\iota(\Sigma)$.

\begin{lemma}\label{lem: dt W}
When restricted to $T[\iota(\Sigma)]$, the time derivative of $W_t$ satisfies
\begin{align*}
\frac{\p W_t}{\p t}  = \iota_\#\left\{\left(\Pi_{\dd\varphi_t^{-1}}\circ \KK_{\varphi_t} \circ \sol \circ \KK_{\varphi_t^{-1}}\right) (\omega)\right\} + {\rm Remainder},
\end{align*}
where $ {\rm Remainder} \in {\bf H}^{\sigma-1}(T[\iota(\Sigma)])$ for some $\sigma > s$, and $\Pi,\KK$ denote respectively the paraproduct and paracomposition as in \S\ref{section: paradiff}. 
\end{lemma}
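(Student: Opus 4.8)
\textbf{Proof proposal for Lemma~\ref{lem: dt W}.}

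The plan is to differentiate $W_t = \Pi_{D\Psi_t}\Phi_t$ in time using the Leibniz rule for the paraproduct, then substitute the evolution equations for $\Phi_t$ and $\Psi_t$ and identify the leading-order term modulo ${\bf H}^{\sigma-1}$. First I would note that $\p_t \Phi_t(z) = V(\Phi_t(z))$ by construction, and that, restricted to $\iota(\Sigma)$, this is $\iota_\#$ of $\p_t \varphi_t = v \circ \varphi_t$. For the inverse flow, differentiating $\Psi_t \circ \Phi_t = \id$ on $\iota(\Sigma)$ yields $\p_t \Psi_t = -(D\Psi_t)\, V$ there, hence $\p_t (D\Psi_t) = -D\big((D\Psi_t) V\big)$ modulo terms supported off $\iota(\Sigma)$ that do not affect the trace; the key point is that, up to smoothing operators, $D\Psi_t$ traces to $\dd\varphi_t^{-1}$ composed with $\dd\iota$-conjugation. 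Applying $\p_t$ to $W_t$ then gives two paraproduct terms: $\Pi_{\p_t(D\Psi_t)}\Phi_t + \Pi_{D\Psi_t}(\p_t\Phi_t) = \Pi_{\p_t(D\Psi_t)}\Phi_t + \Pi_{D\Psi_t}(V\circ\Phi_t)$.

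Next I would analyze each piece. In the first term, $\Phi_t$ is an $H^s$ function with $s>2$, hence bounded, and $\p_t(D\Psi_t)$ is one derivative worse than $D\Psi_t$, i.e.\ of regularity $H^{s-2}$; by the boundedness part of Theorem~\ref{thm: para} (paraproduct by an $L^\infty$ function, or rather by a function of low regularity against a smoother one), this term lands in ${\bf H}^{\sigma-1}$ and is absorbed into the Remainder --- the point being that $\Phi_t - \1$ carries the only ``rough'' factor and it is smoothed against the derivative loss. The main term is therefore $\Pi_{D\Psi_t}(V\circ\Phi_t)$, restricted to $T[\iota(\Sigma)]$. Here I would use \eqref{V and v} to write $V\circ\Phi_t = \iota_\#(v\circ\varphi_t)$ on $\iota(\Sigma)$, and then express $v = \sol(\omega)$ via the Biot--Savart operator \eqref{sol, expression}. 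The composition $v \circ \varphi_t = \sol(\omega)\circ\varphi_t$ is handled by Corollary~\ref{cor: paracomposition}: modulo ${\bf H}^\varrho$ with $\varrho$ above the regularity of $\sol\omega$, it equals $\KK_{\varphi_t}(\sol\omega) + \sum_j \Pi_{D_j(\sol\omega)\circ\varphi_t}(\varphi_t^j)$, the second sum being smoother and going into the Remainder. Finally, since $\omega = \omega_0 \circ \varphi_t^{-1}$ up to smoothing (vorticity transport, Lemma~\ref{lem: vorticity}, in paracomposition form $\omega \approx \KK_{\varphi_t^{-1}}\omega_0$, or more precisely we feed the time-$t$ vorticity through), one identifies the leading term as $\iota_\#\big(\Pi_{\dd\varphi_t^{-1}}\KK_{\varphi_t}\sol\,\KK_{\varphi_t^{-1}}\omega\big)$ after replacing $D\Psi_t$ on the trace by $\dd\varphi_t^{-1}$ (conjugated by $\dd\iota$) and matching the paraproduct structure through Remark~\ref{rem: isom emb and para}.

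The bookkeeping that I expect to be the main obstacle is controlling the discrepancy between the extensions $\Phi_t,\Psi_t$ on $\R^m$ and the intrinsic objects $\varphi_t, \varphi_t^{-1}$ on $\Sigma$: the relation $\Psi_t = \Phi_t^{-1}$ holds only on $\iota(\Sigma)$, so the identity $\p_t(D\Psi_t) = -D((D\Psi_t)V)$ is only valid after restriction, and one must argue that the ambient paraproducts, when restricted to $T[\iota(\Sigma)]$ and pushed back, reproduce the intrinsic paraproduct/paracomposition on $(\Sigma,g)$ up to ${\bf H}^{\sigma-1}$ errors. This is exactly where Remark~\ref{rem: isom emb and para} and the microlocal (principal-symbol) nature of the constructions are used: paraproducts and paracompositions localize singularities, so only the trace of the rough factors on $\iota(\Sigma)$ matters, and the normal-direction behavior of the extensions contributes only to the smooth Remainder. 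A secondary technical point is tracking regularity indices carefully so that every error term genuinely sits in ${\bf H}^{\sigma-1}$ for a uniform $\sigma > s$; this follows by iterating the estimates in Theorem~\ref{thm: para} and Corollary~\ref{cor: paracomposition}, choosing $\sigma$ slightly above $s$ at the outset.
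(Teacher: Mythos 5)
Your overall skeleton (Leibniz rule for $\p_t W_t$, substitution of $\p_t\Phi_t = V\circ\Phi_t$, paracomposition of $v\circ\varphi_t=\sol[\,\cdot\,]\circ\varphi_t$, vorticity transport to produce $\KK_{\varphi_t^{-1}}\omega$, and restriction to $T[\iota(\Sigma)]$ via Remark~\ref{rem: isom emb and para}) matches the paper's argument. But there is a genuine gap at the single most important step: the treatment of $J_1=\Pi_{\p_t(D\Psi_t)}\Phi_t$. You discard $J_1$ outright as a Remainder, and separately discard the paraproduct term $\sum_j\Pi_{D_j(\sol\omega)\circ\varphi_t}(\varphi_t^j)$ produced by the paracomposition of $J_2$. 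The paper does neither: it computes, on the trace, $\p_t D\Psi_t=-D\Psi_t\bullet[DV\circ\Phi_t]$ (no $D^2\Psi_t$ term, because $D\Phi_t=(D\Psi_t)^{-1}$ on $\iota(\Sigma)$), so that $J_1=-\Pi_{D\Psi_t\bullet[DV\circ\Phi_t]}\Phi_t$, and then \emph{cancels} $J_1$ against $\Pi_{D\Psi_t}\Pi_{DV\circ\Phi_t}\Phi_t$ using the composition rule for paraproducts \eqref{cancellation, paraprod}, the difference lying in $H^{s+1}$. This cancellation is the crux: both terms carry $DV=D\bigl(\sol[\omega\circ\varphi_t^{-1}]\bigr)$, i.e.\ essentially $\omega$ itself, in the paraproduct subscript at zeroth order, so neither is a smoothing/compact perturbation \emph{as an operator in $\omega$}; only their difference is. Discarding them individually destroys exactly the structure that Proposition~\ref{proposition: intermediate} and Lemma~\ref{lem: final} need (that everything beyond the principal term $\Pi_{\dd\varphi_t^{-1}}\KK_{\varphi_t}\sol\KK_{\varphi_t^{-1}}\omega$ is negligible for the Fredholm analysis); indeed, at the crude level of membership in a Sobolev space, the principal term itself is only $H^s$, so the same reasoning would let you absorb it too, rendering the decomposition contentless.

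Two further points in your justification do not hold up. First, your regularity claim $\p_t(D\Psi_t)\in H^{s-2}$ together with ``the boundedness part of Theorem~\ref{thm: para}'' is not usable: that theorem requires the subscript function in $L^q$ (in practice $L^\infty$), and $H^{s-2}\not\emb L^\infty$ when $s$ is merely $>2$ --- the paper flags precisely this obstruction for the analogous $[{\rm BAD}]$ term in the proof of Lemma~\ref{lem: divergence of W}; moreover the extra $D^2\Psi_t$ contribution in your formula $\p_t(D\Psi_t)=-D\bigl((D\Psi_t)V\bigr)$ is an artifact that the paper's trace computation avoids. Second, even with the correct subscript regularity $H^{s-1}\subset L^\infty$, the paraproduct $\Pi_h\Phi_t$ \emph{retains} the $H^s$-singularities of $\Phi_t$ (it does not smooth them), so the claim that ``$\Phi_t-\1$ carries the only rough factor and it is smoothed against the derivative loss'' is not correct. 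A minor additional remark: vorticity transport is exact, $\omega_t=\omega_0\circ\varphi_t^{-1}$ (Lemma~\ref{lem: vorticity}), so no ``up to smoothing'' is needed there; the paracomposition is applied to $\omega_0\circ\varphi_t^{-1}$, with the resulting paraproduct term $\Pi_{\dd\omega\circ\varphi_t^{-1}}\varphi_t^{-1}$ legitimately relegated to the Remainder because it carries only the singularities of $\varphi_t^{-1}$.
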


\begin{proof}
We apply the Leibniz rule to $W_t$ defined in \eqref{W, surface} to get
\begin{align*}
\frac{\p}{\p t} W_t &= \Pi_{\frac{\p D\Psi_t}{\p t} } \Phi_t + \Pi_{D\Psi_t} \frac{\p \Phi_t}{\p t} = J_1 + J_2.
\end{align*}

For the second term, since $\varphi_t = \iota^{-1} \circ \Phi_t \circ \iota$ on $\iota(\Sigma)$ and $\varphi_t$ is the integral flow of $v$:
\begin{align*}
\frac{\p}{\p t} \varphi_t(x) = v\left(\varphi_t(x)\right),
\end{align*}
we have 
\begin{align}\label{J2}
J_2 &= \Pi_{D\Psi_t} \left\{ \dd\iota \circ (v \circ \varphi_t) \circ \dd\iota^{-1} \right\}\nonumber\\
&= \Pi_{D\Psi_t} (V\circ\Phi_t).
\end{align}
Here, we make use of the identification of  tangent spaces via $\dd \iota: T\Sigma \to T\R^m$ and $\dd\iota^{-1}: T[\iota(\Sigma)] \to T\Sigma$, as well as \eqref{V and v}.

For the first term, as $\Psi_t  = \iota \circ \varphi_t^{-1} \circ \iota^{-1}$ on $\iota(\Sigma)$, we may compute as follows: 
\begin{align*}
\left(\frac{\p}{\p t} D\Psi_t\right) &= \dd\iota \circ \left( \frac{\p}{\p t }\,\dd\varphi_t^{-1} \right) \circ \dd\iota^{-1}\\
&= - D\Psi_t \bullet \left\{\dd\iota\circ\dd\left(\frac{\p}{\p t}\varphi_t \right)\circ \dd\iota^{-1}\right\}\bullet D\Psi_t \\
&= -D\Psi_t \bullet \left\{ \dd\iota \circ \dd\left(v\circ\varphi_t\right) \circ \dd\iota^{-1} \right\}\bullet D\Psi_t\\
&= -D\Psi_t \bullet \left\{\left[\dd\iota \circ  ((\dd v)\circ\varphi_t )\circ\dd\iota^{-1}\right] \bullet \left[\dd\iota\circ \dd\varphi_t \circ \dd\iota^{-1}\right] \right\}\bullet D\Psi_t\\
&= -D\Psi_t \bullet\left[\dd\iota \circ  ((\dd v)\circ\varphi_t) \circ\dd\iota^{-1}\right]. 
\end{align*}
In the last line we make use of the identities $$\dd \iota \circ \dd\varphi_t \circ \dd \iota^{-1} = \dd (\iota \circ \varphi_t \circ \iota^{-1})=D\Phi_t = (D\Psi_t)^{-1}.$$
Thus, on the tangent bundle of $\iota(\Sigma)$, it holds that
\begin{align*}
J_1 &= -  \Pi_{D\Psi_t \bullet\left[\dd\iota \circ  ((\dd v)\circ\varphi_t) \circ\dd\iota^{-1}\right]} \left\{\iota \circ \varphi_t \circ \iota^{-1}\right\} \nonumber\\
&= -  \Pi_{D\Psi_t \bullet\left[\dd\iota \circ  ((\dd v)\circ\varphi_t) \circ\dd\iota^{-1}\right]} \Phi_t.
\end{align*}
Moreover, by \eqref{V and v} we have $$\dd\iota \circ ((\dd v)\circ\varphi_t)\circ \dd\iota^{-1} = DV\circ\Phi_t.$$ So we deduce that 
\begin{equation}\label{J1}
J_1 = - \Pi_{D\Psi_t \bullet [DV \circ \Phi_t]} \Phi_t.
\end{equation}

Putting together \eqref{J1} and \eqref{J2}, we conclude that
\begin{equation}\label{pW pt}
\frac{\p}{\p t} W_t = \Pi_{D\Psi_t} (V\circ \Phi_t) - \Pi_{D\Psi_t\bullet[DV\circ\Phi_t]} \Phi_t. 
\end{equation}
We point out one possible source of confusion here --- take, for instance, the expression $\dd \iota \circ (v \circ\varphi_t) \circ \dd\iota^{-1}$ in the computation for $J_2$ above. In the strict sense, the second composition is different from the other two; one should view it via $v\circ\varphi_t (x)= v(\varphi_t(x)) := v\big|_{\varphi_t(x)}$. This agrees with the understanding of  vectorfield $v \in \G(T\Sigma)$ as a map $v: \Sigma \to T\Sigma$. In the end, all the compositions in \eqref{pW pt} are of this kind.

Now, by virtue of the convention in Remark~\ref{rem: vector product}, we make use of the paracomposition formula~\eqref{paracomposition, simplified} to infer that
\begin{align*}
\Pi_{D\Psi_t} (V \circ \Phi_t) = \Pi_{D\Psi_t}\big\{ \mathcal{K}_{\Phi_t} V + \Pi_{DV\circ\Phi_t}\Phi_t \big\}.
\end{align*}
Moreover, the product of paraproduct satisfies
\begin{align}\label{cancellation, paraprod}
\left(\Pi_{D\Psi_t} \circ \Pi_{DV \circ \Phi_t} - \Pi_{D\Psi_t \bullet [DV\circ\Phi_t]}\right)\Phi_t \in H^{s+1}
\end{align}
for $\Phi_t \in H^s(\R^m, \R^m)$. It is crucial to have the paraproduct structure   in \eqref{cancellation, paraprod}, which carries only the singularity of $\Phi_t$ and smears out the singularities of $D\Psi_t$ and $DV$ in the subscripts.

Thus, we may further continue \eqref{pW pt} as
\begin{align*}
\frac{\p W_t}{\p t}  = \Pi_{D\Psi} \left(\mathcal{K}_{\Phi_t} V\right)   + {\rm Remainder},
\end{align*}
with $ {\rm Remainder} \in {\bf H}^{\sigma-1}(T[\iota(\Sigma)])$ for some $\sigma > s$.  From the definition of $\Phi_t$, $\Psi_t$, and $V$ as extensions of $\varphi_t$, $\varphi_t^{-1}$, and $v$, respectively, as well as the fact that $\iota: (\Sigma,g) \emb \R^m$ is an isometric embedding (see Remark~\ref{rem: isom emb and para}), we conclude that
\begin{align*}
\frac{\p W_t}{\p t} \mres T[\iota(\Sigma)] = \iota_\#\left\{\left(\Pi_{\dd\varphi_t^{-1}} \KK_{\varphi_t} \right) v\right\} + {\rm Remainder},
\end{align*}
where $ {\rm Remainder} \in {\bf H}^{\sigma-1}(T[\iota(\Sigma)])$ for some $\sigma > s$.

Finally, let us expand the principal term on the right-hand side of the above equality via the definition of $v$. Recall that
\begin{align*}
\frac{\p}{\p t} \varphi_t = v \circ \varphi_t = \sol\left[ \omega \circ \varphi_t^{-1} \right] \circ\varphi_t.
\end{align*}
So
\begin{align*}
\Pi_{\dd\varphi_t^{-1}} \KK_{\varphi_t}v &= \Pi_{\dd\varphi_t^{-1}} \KK_{\varphi_t}\sol\left[ \omega \circ \varphi_t^{-1} \right].
\end{align*}
A further application of the paracomposition~\eqref{paracomposition}  gives us
\begin{align*}
\omega \circ \varphi_t^{-1} &=  \KK_{\varphi_t^{-1}} \omega + \Pi_{\dd\omega \circ \varphi_{t}^{-1}}\varphi_t^{-1} + {\rm Remainder\,(I)}\\
&= \KK_{\varphi_t^{-1}} \omega +{\rm Remainder\,(II)}
\end{align*}
for ${\rm Remainder\,(I),\,(II)} \in {\bf H}^{\sigma-1}(\Sigma,g)$ for some $\sigma > s$. Note here that $\Pi_{\dd\omega \circ \varphi_{t}^{-1}}\varphi_t^{-1}$ carries only the singularities of $\varphi_t^{-1}$, thus is of regularity $H^s$. The proof is now complete.   \end{proof}

\subsection{Div and curl of $\p W_t \slash \p t$}

In the previous subsection we investigated the vectorfield
\begin{equation}\label{Ut}
U_t := \iota^\#\left\{\frac{\p W_t}{\p t}\right\} \in {\bf H}^{s-1}(\Sigma;T\Sigma).
\end{equation}
Recall that we first defined $W_t: \R^m \to \R^m$ as a flow on $\R^m$ which, by construction, restricts to a one-parameter family of diffeomorphisms on $\iota(\Sigma)$ along the integral curve of $\varphi_t$. Thus $U_t$ in \eqref{Ut} is indeed a well-defined vectorfield on $\Sigma$. Moreover, since $\iota$ is a smooth isometric embedding and $W_t$ has the same $H^s$-regularity as $\varphi_t$, we have $U_t \in {\bf H}^{s-1}(\Sigma;T\Sigma)$.

Now, following \cite[\S 3]{s}, let us focus on the divergence and curl of $U_t$; that is,
\begin{align*}
\dd^* U_t  \in {\bf H}^{s-2} (\Sigma;\R)\quad \text{ and } \quad \star\dd\left[\left(U_t\right)^\sharp\right] \in  {\bf H}^{s-2}(\Sigma,\R). 
\end{align*}
We shall conclude the proof for our main Theorem~\ref{thm} by analysing these two quantities. More precisely, we prove that if the curl of $U_t$ is Fredholm and if the divergence of $U_t$ equals zero modulo a remainder of higher regularity, then $\Exp$ is Fredholm quasiruled.
\begin{proposition}\label{proposition: intermediate}
Let $(\Sigma,g)$ be a closed, connected, orientable surface, and let $s>2$. Define
\begin{align}\label{B}
\mathcal{B}: {\bf H}^{s-1}(\Sigma;\R)\times {\bf H}^{s-1}(\Sigma;\R)\, &\longrightarrow\, {\bf H}^{s-1}(\Sigma;\R),\nonumber\\
(\zeta,\omega) \,&\longmapsto \, \star\dd\left[\left(\Pi_{\dd\Xi^{-1}}\circ \KK_{\Xi} \circ \sol \circ \KK_{\Xi^{-1}}\right) (\omega)\right]^\sharp
\end{align}
where $$\Xi = \Xi_t = \int_0^t \sol\left(\omega\circ\Xi_\tau^{-1}\right)\circ\Xi_\tau \,\dd\tau.$$ Suppose that $$\widetilde{\mathcal{B}}(\omega):=\mathcal{B}(\omega,\omega)$$ is a Fredholm mapping of index zero. Suppose also that $$\dd^* U_t \in {\bf H}^{\sigma-2}(\Sigma;\R)$$ for some $\sigma >s$. Then $\Exp:T_{\id} \hsdiff(\Sigma,g) \to \hsdiff(\Sigma,g)$ is  a Fredholm quasiruled map of index zero.
\end{proposition}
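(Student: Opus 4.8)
The plan is to reduce the Fredholm quasiruledness of $\Exp$ to the hypotheses on $\widetilde{\mathcal{B}}$ and $\dd^* U_t$ by following the architecture of Shnirelman's argument \cite[\S 3]{s}, with the flat-torus Fourier analysis replaced by the paradifferential machinery of \S\ref{section: paradiff}. First I would fix $v \in T_{\id}\hsdiff(\Sigma,g)$, let $\varphi_t$ be the Euler flow it generates, and form $W_t$ and $U_t = \iota^\#(\p_t W_t)$ as in \eqref{W, surface}--\eqref{Ut}. By Lemma~\ref{lem: dt W}, $U_t$ equals the principal term $(\Pi_{\dd\varphi_t^{-1}} \circ \KK_{\varphi_t} \circ \sol \circ \KK_{\varphi_t^{-1}})(\omega)$ up to an ${\bf H}^{\sigma-1}$-remainder, and the map sending $\omega = \star\dd v^\sharp$ to the curl of this principal term is exactly $\widetilde{\mathcal{B}}$ (after rewriting the flow $\varphi_t$ via $\Xi_t$, which is legitimate since $\varphi_t$ and $\Xi_t$ solve the same ODE with the same Biot--Savart right-hand side). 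The key structural fact, as in \cite{s}, is that $W_1$ (equivalently, its boundary value on $\iota(\Sigma)$) encodes $\varphi_1 = \Exp(v)$ modulo lower-order terms: one recovers $\varphi_t$ from the curl of $W_t$ through $\sol$, so $\Exp$ and the curl-of-$W_1$ map differ by a smoothing, in particular compact, correction.

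Next I would assemble the three ingredients that upgrade "Fredholm of $\widetilde{\mathcal{B}}$" to "Fredholm quasiruled of $\Exp$". (i) \emph{Quasiruledness}: the paraproduct $\Pi_h(f)$ is affine — indeed linear — in $f$ for fixed $h$, and by Theorem~\ref{thm: para} it depends continuously on $h$; freezing the subscript data $(\dd\varphi_t^{-1}, \KK_{\varphi_t}, \ldots)$ along the affine foliation of $T_{\id}\hsdiff(\Sigma,g)$ determined by a finite-dimensional family of "base" velocities produces ruled approximants $\{F_k\}$ to $\Exp$, exactly paralleling \cite[\S 3]{s}. The divergence hypothesis $\dd^* U_t \in {\bf H}^{\sigma-2}$ guarantees that these approximants respect the incompressibility constraint up to smoothing, so the foliation leaves $X^\alpha$ land (modulo compact error) in affine subspaces of the constrained target. (ii) \emph{Closed range and codimension count}: here the Fredholm-index-zero hypothesis on $\widetilde{\mathcal{B}}$ is used — $\dd_v\widetilde{\mathcal{B}}$ has closed range of finite codimension equal to $\dim\ker$, which transfers to the ruled pieces $F_k|_{X^\alpha}$ giving $\operatorname{codim} Y^\alpha = \operatorname{codim} X^\alpha = k$ and bijectivity of $F_k|_{X^\alpha}: X^\alpha \to Y^\alpha$. (iii) \emph{Uniform invertibility}: local uniform continuity of $(F_k|_{X^\alpha})^{-1}$ follows from uniform paradifferential estimates (the constants in Theorem~\ref{thm: para} are geometric, not $k$-dependent) together with the smoothing nature of $\sol$ and the paracomposition remainder bounds of Corollary~\ref{cor: paracomposition}. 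Finally, since $\Delta_g$ restricted to the fixed cohomology class is invertible (so $\sol$ is genuinely its inverse and has index zero, as noted at the end of \S\ref{section: BS}), and the remaining operators in the composition defining $\mathcal{B}$ are either paradifferential of order zero or smoothing, the index of $\Exp$ equals the index of $\widetilde{\mathcal{B}}$, namely zero.

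I expect the main obstacle to be step (i)--(ii) carried out \emph{with the constraint}: namely, showing that the ruled foliation of the unconstrained Whitney-extended space $H^s(\R^m,\R^m)$ descends to a genuine ruled foliation of $T_{\id}\hsdiff(\Sigma,g)$ compatible with ${\rm div}\,v = 0$, and that the images $Y^\alpha$ are closed of the correct codimension. On the flat torus Shnirelman handles this with explicit Fourier projections onto divergence-free fields; on $(\Sigma,g)$ one must instead use the Hodge decomposition and the fact that $\dd^* U_t$ is smoothing (our divergence hypothesis) to argue that the "bad" non-divergence-free part of each ruled approximant is compact and hence does not affect closedness or the index. A secondary technical point is the patching of the Whitney extensions $\Phi_t, \Psi_t$ on $\R^m$: since $\Psi_t \ne \Phi_t^{-1}$ off $\iota(\Sigma)$, one must be careful that all the cancellations in Lemma~\ref{lem: dt W} and in \eqref{cancellation, paraprod} are used only after restricting to $T[\iota(\Sigma)]$, and that the restriction-to-$\Sigma$ operator interacts well with the paraproducts — this is guaranteed by Remark~\ref{rem: isom emb and para} but needs to be invoked at each step. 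Once these are in place, the verification of Shnirelman's four defining conditions for Fredholm quasiruledness is bookkeeping, and Proposition~\ref{proposition: intermediate} follows.
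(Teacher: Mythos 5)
Your proposal follows essentially the same architecture as the paper's own proof: invoke Lemma~\ref{lem: dt W} to identify the curl of $U_t$ with $\widetilde{\mathcal{B}}(\omega)$ up to a higher-regularity remainder, use the divergence hypothesis $\dd^* U_t \in {\bf H}^{\sigma-2}$ to make the Biot--Savart recovery of $\varphi_t$ from the curl data well-defined, transfer the Fredholm-index-zero property through the composition of zeroth-order paradifferential operators and the order-$(-1)$ operator $\overline{\sol}$, and appeal to the affine-in-the-second-slot structure of $\mathcal{B}(\zeta,\omega)$ for the quasiruledness. The one place where the paper is more explicit than you are is the recovery step: it introduces the Bochner integral $\mathcal{I}(t) = \int_0^t\widetilde{\mathcal{B}}(\omega(\tau,\cdot))\,\dd\tau$ and writes out the explicit chain $\varphi_t = \iota^\#\circ\Pi_{D\Psi}\circ\iota_\#\circ\overline{\sol}(\mathcal{I}(t))$ restricted to the cohomology class $[v_0]$, so that $\Exp = \G_1\circ\star\dd(\cdot)^\sharp$ is exhibited directly as a composition of index-zero Fredholm maps; conversely the paper is terser than you on the verification of Shnirelman's four conditions, simply invoking that $\widetilde{\mathcal{B}}$ being Fredholm quasilinear yields Fredholm quasiruledness of $\Exp$, which is the observation you flesh out in your item (i). Neither difference affects the substance.
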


For notational convenience we shall sometimes suppress the variable $t$, when all the relevant identities/estimates are kinematic, \emph{i.e.}, holding pointwise in $t$.

\begin{proof}
We first notice by Lemma~\ref{lem: dt W} that 
\begin{align*}
\star\dd\left[\left(U_t\right)^\sharp\right] \approx \star\dd\left[\left(\Pi_{\dd\varphi_t^{-1}}\circ \KK_{\varphi_t} \circ \sol \circ \KK_{\varphi_t^{-1}}\right) (\omega)\right]^\sharp
\end{align*}
modulo a remainder term of higher regularity (namely, $H^{\sigma-2}$ for some $\sigma > s$). As $\Sigma$ is compact, the Fredholm property is invariant under such higher regularity perturbations by the Rellich lemma. Moreover, recall that $\frac{\p}{\p t}\varphi_t = \sol \left(\omega\circ\varphi_t^{-1}\right)\circ\varphi_t$, which give us
\begin{align*}\widetilde{\mathcal{B}}(\omega):=\mathcal{B}(\omega,\omega) = \star \dd\left(U_t \right)^\sharp = \star\dd\left[\iota^\#\left(\frac{\p W_t}{\p t} \right)\right]^\sharp + {\rm Remainder}
\end{align*}
with ${\rm Remainder} \in {\bf H}^{\sigma-2}(\Sigma;\R)$. This space embeds compactly into ${\bf H}^{s-2}(\Sigma;\R)$, so the remainder has no effect on Fredholm index.

Consider now the Bochner integral 
\begin{align}\label{I, def}
\mathcal{I}(t) := \int_0^t \widetilde{\mathcal{B}} (\omega(\tau, \bullet))\,\dd\tau. 
\end{align}
This defines a function for each $t$, which equal to the curl of $W_t$ when restricted to $\iota(\Sigma)$. More precisely, denoting by $D$ the Euclidean gradient, one has
\begin{align}\label{I, Bochner}
\mathcal{I}(t) &= \star \circ \iota^\#\left\{ D\left[(W_t)^\sharp\right]\right\}\nonumber\\
&= \star \circ \iota^\#\left\{ D\left[\left( \Pi_{D\Psi_t}\Phi_t \right)^\sharp\right]\right\}  \,\in\,{\bf H}^{s-2}(\Sigma;\R).
\end{align}
Here, $D\left[(W_t)^\sharp\right]$ is a differential 2-form on $\R^m$, whose restriction to $\iota(\Sigma)$ is also a 2-form on this  2-dimensional submanifold. Recall also that $\Psi_t$, $\Phi_t$ are extensions of $\varphi_t^{-1}$ and $\varphi_t$, respectively. The Hodge star in \eqref{I, Bochner} is a mapping $\star: {\bf H}^{s-2}\left(\Sigma; \bigwedge^2T^*\Sigma\right) \to {\bf H}^{s-2}(\Sigma;\R)$.

We can recover $\varphi_t \in \hsdiff(\Sigma,g)$ from $\mathcal{I}(t)$ as follows. Notice from \eqref{I, Bochner} that
\begin{align*}
\iota_\#\varphi_t = \Phi_t \circ\iota = \Pi_{\dd\Psi}\circ\iota_\# \circ \overline{\sol} \left(\mathcal{I}(t)\right).
\end{align*}
Here $\overline{\sol}$ is the inverse of $\star \circ \iota^\#\circ D \circ \sharp$ in \eqref{I, Bochner}, which is nothing but the curl/rot operator on $\iota(\Sigma)$. So $\overline{\sol}$ is the corresponding Biot--Savart operator, and is well-defined thanks to the assumption  $\dd^* U_t \in {\bf H}^{\sigma-2}(\Sigma;\R)$ for  $\sigma >s$. More precisely, let us solve for $\varsigma := \overline{\sol}(\mathcal{I}(t))$ such that 
\begin{equation*}
\overline{d} \varsigma = \mathcal{I}(t)\qquad \text{ and } \qquad \overline{d^*} \varsigma = 0,
\end{equation*}
where $\overline{d} = \iota^\#\circ D$ is the exterior differential on $\iota(\Sigma)$, and $\overline{d^*}$ is the corresponding codifferential with respect to the metric $\iota_\# g$. Moreover, we fix any harmonic $1$-form $\varsigma_0 \in {\bf Harm}^1(\iota(\Sigma))$ and require $\varsigma$ and $\varsigma_0$ to be cohomologous. Then, denoting by $\overline{\mathscr{G}} $ the Green's matrix for $(\iota(\Sigma),\iota_\#g)$, by $\overline{\Delta}$ the corresponding Laplace--Beltrami operator, and by $\mathscr{H}_{\zeta_0}$ the projection onto the cohomology class of $\varsigma_0$, one obtains
\begin{align*}
\varsigma = \overline{\mathscr{G}} \left\{ \overline{\Delta} \int_0^t U_\tau\,\dd \tau - \overline{\dd}\,\overline{\dd^*} \int_0^t U_\tau\,\dd\tau \right\}  + \mathscr{H}_{\varsigma_0}(\varsigma).
\end{align*}
See \S\ref{section: BS} for details. The projection $\mathscr{H}_{\zeta_0}$ is smooth and the Green's operator $\overline{\mathscr{G}}$ is of order $-2$. In addition, by assumption we have $\overline{\dd^*} U_t \in {\bf H}^{\sigma-2}(\Sigma;\R)$ for $\sigma >s$. Thus 
\begin{align*}
&\mathcal{I}(t) = \overline{\dd}\int_0^t U_\tau\,\dd \tau \in{\bf H}^{s-2}(\Sigma;\R)\\
&\qquad\Longrightarrow \, \varsigma = \overline{\sol}(\mathcal{I}(t))  \in{\bf H}^{s-1}(\Sigma;T\Sigma) + {\bf H}^{\sigma-1}(\Sigma;T\Sigma) \quad\text{for some $\sigma > s$}.
\end{align*}

The discussions in the previous paragraph and the definition of $\mathcal{I}(t)$ in  \eqref{I, def} yield that 
\begin{align}\label{x, new}
\varphi_t &= \iota^\# \circ \Pi_{D\Psi} \circ \iota_\#\circ \overline{\sol} \left(\mathcal{I}(t)\right) \nonumber\\
&= \iota^\# \circ \Pi_{D\Psi} \circ \iota_\#\circ \overline{\sol} \left( \int_0^t \widetilde{\mathcal{B}} \left(\omega(\tau,\bullet)\right)\,\dd\tau\right),
\end{align}
where $\mathcal{I}(t) \in {\bf H}^{s-2}(\Sigma;\R)$. By construction of the Biot--Savart operator $\overline{\sol}$ and the paraproduct (see Theorem~\ref{thm: para}), we find that $\iota^\# \circ \Pi_{D\Psi} \circ \iota_\# \circ \overline{\sol}$ maps ${\bf H}^{s-2}(\Sigma;\R)$ continuously into $\hsdiff(\Sigma,g)$, and each of the four mappings in the composition are Fredholm of index zero. 

Therefore, if $\widetilde{\mathcal{B}} \left(\omega\right)$ is Fredholm of index zero (hence so is its Bochner integral $\mathcal{I}(t)$), then for each $t$ the mapping
\begin{align*}
\G_t: {\bf H}^{s-2}(\Sigma;\R) \,&\longrightarrow\, \hsdiff(\Sigma,g),\\
\omega \,&\longmapsto\, \varphi_t
\end{align*}
is also Fredholm of index zero. Then
\begin{equation*}
\Exp (v) \equiv \G_1 \left( \star\dd\left(v^\sharp\right) \right) :\, \left[v_0\right] \subset T_{\id} \hsdiff(\Sigma,g)\, \longrightarrow \, \hsdiff(\Sigma,g)
\end{equation*}
is Fredholm of index zero too. We restrict the domain of $\Exp$ to $$[v_0] := \left\{v \in T_{\id} \hsdiff(\Sigma,g):\, v^\sharp \text{ and } (v_0)^\sharp \text{ are cohomologous} \right\},$$ in order to ensure the invertibility of the differential $\dd$. Moreover, as $\widetilde{\mathcal{B}} \left(\omega\right)$ is Fredholm quasilinear, $\Exp$ is Fredholm quasiruled.  The proof for the proposition is now complete. \end{proof}

Therefore, to establish the main Theorem~\ref{thm}, it remains to verify the two assumptions in Proposition~\ref{proposition: intermediate}. This shall be carried out in the remaining two subsections.

\subsection{Divergence of $U_t$}

We prove the following
\begin{lemma}\label{lem: divergence of W} Let $U_t$ be as in \eqref{Ut}. Then $\dd^* U_t$ takes values in a compact subset of ${\bf H}^{s-2}(\Sigma;\R)$.
\end{lemma}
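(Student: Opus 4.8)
The plan is to reduce the computation, via Lemma~\ref{lem: dt W}, to the divergence of the operator $\Pi_{\dd\varphi_t^{-1}}\circ\KK_{\varphi_t}\circ\sol\circ\KK_{\varphi_t^{-1}}$ applied to $\omega$, and then to exploit two structural facts: the Biot--Savart field $\sol(\,\cdot\,)$ is divergence-free by construction (\eqref{biot-savart}), equivalently $\dd^*\circ\sol=0$ — which is immediate from \eqref{biot savart, expression} together with $[\Delta_g,\dd^*]=0$ and the identity $\dd^*\star\dd\equiv0$ on functions; and $\varphi_t$ is volume-preserving, $\det\dd\varphi_t\equiv1$.

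First I would record, from Lemma~\ref{lem: dt W} and \eqref{Ut}, that
\[
U_t=\bigl(\Pi_{\dd\varphi_t^{-1}}\circ\KK_{\varphi_t}\circ\sol\circ\KK_{\varphi_t^{-1}}\bigr)(\omega)+R_t,\qquad R_t\in{\bf H}^{\sigma-1}(\Sigma;T\Sigma)\ \text{for some }\sigma>s,
\]
so that $\dd^*R_t\in{\bf H}^{\sigma-2}$, which is precompact in ${\bf H}^{s-2}$ by the Rellich lemma; it thus remains to treat the principal term. Set $Z:=(\sol\circ\KK_{\varphi_t^{-1}})(\omega)\in{\bf H}^s(\Sigma;T\Sigma)$; by \eqref{biot-savart}, $\dd^*Z^\sharp=0$. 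Next I would use Corollary~\ref{cor: paracomposition} to unfold $\KK_{\varphi_t}Z$ as $Z\circ\varphi_t$ minus a paraproduct carrying only the singularities of $\varphi_t$, Theorem~\ref{thm: para}(2) to replace $\Pi_{\dd\varphi_t^{-1}}(\,\cdot\,)$ by genuine multiplication by $\dd\varphi_t^{-1}$, and the product-of-paraproducts cancellation underlying \eqref{cancellation, paraprod}; each replacement costs only a remainder in ${\bf H}^{\sigma'-2}$, $\sigma'>s$. Applying $\dd^*$ to the resulting explicit expression and using the Leibniz rule, every term falls into one of two classes: (a) terms in which the derivative is, up to the volume-preserving change of variables $\varphi_t$ (here $\det\dd\varphi_t\equiv1$ is used to identify the Jacobian factors), routed onto $\dd^*Z^\sharp=0$, hence vanishing; (b) terms which are paraproducts with rough symbol $\na\dd\varphi_t^{\pm1}\in{\bf H}^{s-2}$ acting on an ${\bf H}^s$-function, or in which two first derivatives of $\varphi_t$ collide — these gain a definite amount of regularity by Theorem~\ref{thm: para} and \eqref{cancellation, paraprod}, because $s>2$ forces $2(s-1)>s$. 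This yields $\dd^*U_t\in{\bf H}^{\sigma'-2}(\Sigma;\R)$ for some $\sigma'>s$, and the lemma follows by Rellich.

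The hard part will be the bookkeeping in class (b): one must verify that \emph{every} paraproduct remainder and commutator produced by moving $\dd^*$ across $\Pi_{\dd\varphi_t^{-1}}$ and $\KK_{\varphi_t}$ genuinely improves regularity beyond ${\bf H}^{s-2}$. This is delicate precisely because $\dd\varphi_t^{-1}$ has only ${\bf H}^{s-1}$-regularity — so $\na\dd\varphi_t^{-1}\in{\bf H}^{s-2}$ need not be bounded when $2<s\le3$ — and one must lean throughout on the sharp paraproduct bounds of Theorem~\ref{thm: para}. It is the curved-surface counterpart of the estimates carried out on the flat torus in Shnirelman \cite[pp.~S393--S394]{s}.
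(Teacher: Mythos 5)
Your outline is essentially the paper's: reduce via Lemma~\ref{lem: dt W}, unfold the paracomposition $\KK_{\varphi_t}$ into $(\,\cdot\,)\circ\varphi_t$ plus a paraproduct remainder, exploit $\dd^*\circ\sol=0$ for the vanishing of the principal term, and push the rest into ${\bf H}^{\sigma-2}$ with $\sigma>s$. You even correctly identify the delicate point: the paraproduct symbols that appear after differentiation live only in ${\bf H}^{s-2}$, which is not $L^\infty$ when $2<s\le3$.

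But that is exactly where the gap sits, and the resolution you propose --- \emph{``lean throughout on the sharp paraproduct bounds of Theorem~\ref{thm: para}''} --- is the one the paper explicitly rules out. Corollary~\ref{cor: paracomposition} produces the term $[{\rm BAD}]:=\Pi_A\varphi_t$ with $A=\dd\bigl[\sol\circ\KK_{\varphi_t^{-1}}(\omega)\bigr]\circ\varphi_t\in{\bf H}^{s-2}$, and Theorem~\ref{thm: para}(1) requires the subscript to be in $L^q$ with $q\le\infty$ at the cost of the output index; in the relevant range it needs $A\in L^\infty$, which fails. The heuristic $2(s-1)>s$ does not rescue this, because the obstruction is not a summation-of-indices count but a failure of the $L^\infty$ hypothesis for the paraproduct estimate itself. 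The paper supplies the missing step by a hands-on computation: with $s=2+\e$ and target $\varrho=1+2\e$, one writes $[{\rm BAD}]=(\1-\Delta_g)^{-\varrho/2}\bigl(\langle A,\Delta^{\varrho/2}\varphi_t\rangle\bigr)$, uses the Sobolev embeddings $A\in{\bf H}^\e\hookrightarrow L^{2/(1-\e)}$ and $\Delta^{\varrho/2}\varphi_t\in{\bf H}^{1-\e}\hookrightarrow L^{2/\e}$, applies Cauchy--Schwarz to land in $L^2$, and then invokes $H^\varrho$-boundedness of the Riesz transform. Without some replacement for this estimate your class (b) is not actually closed.

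A smaller structural point: you propose to replace $\Pi_{\dd\varphi_t^{-1}}(\,\cdot\,)$ by genuine multiplication, which forces you to control $\Pi_{(\,\cdot\,)}\dd\varphi_t^{-1}$ and ${\bf Rest}$; the first of these carries the full ${\bf H}^{s-1}$ singularity of $\dd\varphi_t^{-1}$, so after $\dd^*$ you are a priori only at ${\bf H}^{s-2}$, with no gain. The paper sidesteps this by never converting the paraproduct to a product: it commutes $\dd^*$ past $\Pi_{\dd\varphi_t^{-1}}$ and uses that $[\dd^*,\Pi_{\dd\varphi_t^{-1}}]$ is a zeroth-order operator, producing a remainder directly in ${\bf H}^{s-1}\Subset{\bf H}^{s-2}$. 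That commutator route is cleaner and avoids the extra paraproduct remainders your version introduces. (Your bookkeeping for the vanishing term via volume-preservation and $\dd^*Z^\sharp=0$ is fine --- if anything, more carefully stated than the paper's one-line assertion.)
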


\begin{proof}

By Lemma~\ref{lem: dt W} we have
\begin{align*}
\dd^* U_t = \dd^*\left\{\left(\Pi_{\dd\varphi_t^{-1}}\circ \KK_{\varphi_t} \circ \sol \circ \KK_{\varphi_t^{-1}}\right) (\omega)\right\} + {\rm Remainder},
\end{align*}
with ${\rm Remainder} \in {\bf H}^{\sigma-2}(\Sigma;\R)$ for some $\sigma > s$.

Observe also that
\begin{align*}
 \dd^*\left\{\left(\Pi_{\dd\varphi_t^{-1}}\circ \KK_{\varphi_t} \circ \sol \circ \KK_{\varphi_t^{-1}}\right) (\omega)\right\} = \Pi_{\dd\varphi_t^{-1}}\left\{\dd^*\left( \KK_{\varphi_t} \circ \sol \circ \KK_{\varphi_t^{-1}}(\omega) \right)\right\} + {\rm Remainder},
\end{align*}
with ${\rm Remainder} \in {\bf H}^{s-1}(\Sigma;\R)$, since the commutator $\left[\dd^*,\pi_{\dd\varphi_t^{-1}}\right]$ is a pseudodifferential operator of order $0$.

Next, thanks to Corollary~\ref{cor: paracomposition} with $\varrho = s-1$ therein,
\begin{align*}
 \KK_{\varphi_t} \circ \sol \circ \KK_{\varphi_t^{-1}}(\omega) = \left[\sol\circ\KK_{\varphi_t^{-1}}(\omega)\right]\circ\varphi_t + {\rm Remainder},
\end{align*}
with ${\rm Remainder} \in {\bf H}^{\sigma-1}(\Sigma;\R)$ for some $\sigma > s$. Indeed,   the paracomposition leads to a term $$[{\rm BAD}] := \Pi_{ \dd\left[ \sol \circ \KK_{\varphi_t^{-1}}(\omega) \right] \circ\varphi_t}\varphi_t;$$ for simplicity we abbreviate it as $\Pi_A\varphi_t$. However, $\omega \in {\bf H}^{s-1}$ and hence $A \in {\bf H}^{s-2}$, for which we cannot directly apply  the estimate in Theorem~\ref{thm: para}: it needs $A \in L^\infty$ thereof, but ${\bf H}^{s-2}$ does not embed in $L^\infty$ when merely assuming $s>2$.

To this end, we show by means of hands-on estimates that for $s=2+\e$, $\varrho = 1+2\e$ for any $\e \in ]0,1[$, we can indeed bound the $H^{\varrho}$-norm of  $[{\rm BAD}]$. Note that 
\begin{align*}
[{\rm BAD}] = (\1 -\Delta_g)^{-\varrho \slash 2} \left( \bra A,\,  \Delta^{\varrho \slash 2} \varphi_t \ket \right).
\end{align*}
Then, by Sobolev embedding, we have $A \in {\bf H}^{\e} \emb L^{\frac{2}{1-\e}}$ and $\Delta^{\varrho \slash 2} \varphi_t \in {\bf H}^{1-\e} \emb L^{\frac{2}{\e}}$, so Cauchy--Schwarz gives us $\bra A,\,  \Delta^{\varrho \slash 2} \varphi_t \ket \in L^2$. (Here $\bra\bullet,\bullet\ket$ is a quadratic function with bounded coefficient.) The $H^\varrho \emb H^{\varrho}$ boundedness of the Riesz transform shows that $[{\rm BAD}]$ is bounded in $H^{\varrho} = H^{1+2e}$, which is compactly embedded into $H^{s-1} = H^{1+\e}$ by the Rellich lemma.

Finally, by construction of the  Biot--Savart operator in \S\ref{section: BS}, $\sol$ takes values in the $\dd^*$-free part of differential 1-forms. So we have
\begin{align*}
\dd^* \left\{\left[\sol\circ\KK_{\varphi_t^{-1}}(\omega)\right]\circ\varphi_t \right\} = 0. 
\end{align*}

In summary, the above computations yield that $\dd^* U_t = 0 + {\rm Remainder} \in {\bf H}^{\sigma-2}(\Sigma;\R)$ for some $\sigma > s$. The proof is complete by the Rellich lemma. \end{proof}

\subsection{$\widetilde{\mathcal{B}}$ is Fredholm}

Finally, we prove that 
\begin{lemma}\label{lem: final}
The mapping $\widetilde{\mathcal{B}}: {\bf H}^{s-1}(\Sigma;\R) \to {\bf H}^{s-1}(\Sigma;\R)$ defined in Proposition~\ref{proposition: intermediate} is Fredholm of index zero.
\end{lemma}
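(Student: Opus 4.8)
The plan is to analyze the mapping
\[
\widetilde{\mathcal{B}}(\omega) = \star\dd\Big[\big(\Pi_{\dd\Xi^{-1}}\circ \KK_{\Xi}\circ \sol\circ \KK_{\Xi^{-1}}\big)(\omega)\Big]^\sharp
\]
factor by factor, tracking the principal symbol through the composition and reducing the Fredholm property to a statement about a single (essentially linear modulo smoothing) operator on $1$-forms. First I would recall, from the construction in \S\ref{section: BS}, that $\sol$ is a pseudodifferential operator of order $-1$ with principal symbol $\ppl(\sol)(x,\xi) = -\tfrac{\2}{2\pi}\,\J\xi/|\xi|_g^2$, that $\star\dd$ has principal symbol $2\pi\2\,\J\xi\wedge\bullet$ (acting as left multiplication by $\J\xi$ up to the constant), and that the paracomposition operators $\KK_{\Xi}$, $\KK_{\Xi^{-1}}$ carry only the singularities of their argument; by Corollary~\ref{cor: paracomposition} each differs from honest composition with $\Xi$, resp.\ $\Xi^{-1}$, by an operator gaining regularity (mapping ${\bf H}^{s-1}\to {\bf H}^{\varrho}$, $\varrho>s-1$), which is compact by Rellich since $\Sigma$ is closed. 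Likewise, by Theorem~\ref{thm: para}, $\Pi_{\dd\Xi^{-1}}$ differs from an operator of order $0$ that is elliptic (its symbol being left multiplication by the invertible matrix $\dd\Xi^{-1}$, up to smoothing) by a smoothing term, because $\dd\Xi^{-1}\in {\bf H}^{s-1}\emb L^\infty$.

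Next I would assemble these facts. Modulo operators that are compact ${\bf H}^{s-1}\to{\bf H}^{s-1}$, the chain $\KK_{\Xi^{-1}}$ then $\sol$ then $\KK_{\Xi}$ then $\Pi_{\dd\Xi^{-1}}$ then $\star\dd(\cdot)^\sharp$ is, up to lower-order and compact corrections, the conjugate by the diffeomorphism $\Xi$ of the composition $\star\dd\circ(\cdot)^\sharp\circ\Pi_{\dd\Xi^{-1}}\circ\sol$, which has order $-1+0+1 = 0$. The key computation is that the principal symbol of $\star\dd\circ(\cdot)^\sharp\circ\sol$ is, by the symbol multiplicativity \eqref{ppl}, $(2\pi\2\,\J\xi)\cdot\big(-\tfrac{\2}{2\pi}\J\xi/|\xi|_g^2\big)$, and since $\J^2 = -\1$ and $|\J\xi|_g = |\xi|_g$ this collapses to a nonzero scalar (namely $-\2^2\,\J\xi\cdot\J\xi/|\xi|_g^2 = 1$ after contracting with the metric), showing the composite is elliptic of order $0$. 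Inserting $\Pi_{\dd\Xi^{-1}}$ with its invertible symbol preserves ellipticity; conjugation by the diffeomorphism $\Xi\in\hsdiff(\Sigma)$ preserves both ellipticity and the order. Hence $\widetilde{\mathcal{B}}$ is, modulo compact operators on ${\bf H}^{s-1}(\Sigma;\R)$, an elliptic pseudodifferential operator of order $0$ on the closed surface $\Sigma$, therefore Fredholm.

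For the index, I would argue it is $0$ by a homotopy. Each building block — $\sol=\rot^{-1}$ restricted to the fixed cohomology class (so that $\Delta_g$ is genuinely invertible, as noted at the end of \S\ref{section: BS}), $\star\dd$, and the zeroth-order $\Pi_{\dd\Xi^{-1}}$ with invertible symbol — has a continuous homotopy to an invertible operator (for instance, $\dd\Xi^{-1}$ may be deformed along the path $t\mapsto \dd\Xi_{st}^{-1}$, $s\in[0,1]$, to $\dd\Xi_0^{-1}=\1$, keeping the symbol invertible throughout, and the paracomposition operators deform correspondingly to the identity), and the index is invariant under such continuous deformations through Fredholm operators. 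At $s=0$ the composition becomes $\star\dd\circ(\cdot)^\sharp\circ\sol$, which is the identity up to smoothing by the symbol computation above (and literally invertible on the fixed cohomology class, since $\sol$ inverts $\rot$ there), hence has index $0$; therefore so does $\widetilde{\mathcal{B}}$. The main obstacle I anticipate is the bookkeeping required to justify that all the ``$\approx$'' corrections — from replacing paracompositions by genuine compositions, from commuting $\star\dd$ and $\dd^*$ past paraproducts, and from the low-regularity term $[{\rm BAD}]$ encountered in the divergence estimate — are genuinely compact perturbations on ${\bf H}^{s-1}(\Sigma;\R)$ and do not spoil the symbol calculus; this hinges on the sharp mapping properties in Theorem~\ref{thm: para} and Corollary~\ref{cor: paracomposition} together with the hypothesis $s>2$, and on the fact that pseudodifferential/paradifferential operators of negative order on a compact manifold are compact by Rellich.
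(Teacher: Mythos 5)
Your overall strategy (reduce to a principal-symbol/ellipticity statement for an order-zero operator and get the index by deforming $\Xi$ to the identity) is the same as the paper's, but the key step is carried out incorrectly, and the gap is exactly where the real content of the lemma lies. The operator is \emph{not}, modulo compact errors, the conjugate by $\Xi$ of $\star\dd(\cdot)^\sharp\circ\Pi_{\dd\Xi^{-1}}\circ\sol$: only the middle factor $\sol$ is sandwiched between $\KK_{\Xi}$ and $\KK_{\Xi^{-1}}$, while $\Pi_{\dd\Xi^{-1}}$ and $\star\dd$ act \emph{after} the paracomposition. Pulling the composition operator out through $\star\dd$ and the paraproduct changes the principal symbol via the chain rule (this Jacobian bookkeeping is precisely why the factor $\Pi_{\dd\Xi^{-1}}$ appears in $W_t$ in the first place), so the discrepancy is an order-zero operator with nonvanishing symbol, not a compact one. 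Concretely, the true principal symbol is
\begin{equation*}
\ppl(\main)(x,\xi)\;=\;\frac{\bra \J\xi,\;\dd\Xi^{-1}\,\J\left(\dd\Xi^{*}\right)^{-1}\xi\ket_g}{\left|\left(\dd\Xi^{*}\right)^{-1}\xi\right|_g^{2}},
\end{equation*}
in which the outer covector $\J\xi$ is \emph{not} transformed while the inner one is; your rearranged version would instead produce $\bra \J\eta,\,\dd\Xi^{-1}\J\eta\ket_g/|\eta|_g^2$ with $\eta=(\dd\Xi^*)^{-1}\xi$, and these differ by a non-compact order-zero operator.

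Moreover, your claim that ``inserting $\Pi_{\dd\Xi^{-1}}$ with its invertible symbol preserves ellipticity'' is false in the form needed: the outer factors $\star\dd(\cdot)^\sharp$ and $\sol$ have vector-valued (non-square) symbols, so the composite is the \emph{scalar} pairing above, and a quantity of the form $\bra v, Mv\ket_g$ with $M=\dd\Xi^{-1}$ invertible can perfectly well vanish (it only sees the symmetric part of $M$; e.g.\ at a point where $\dd\Xi^{-1}$ acts as a rotation by $\pi/2$, an area-preserving possibility, it is identically zero). So invertibility of each factor's symbol does not give ellipticity; one must verify nonvanishing of the specific combination in which the inner covariable is $(\dd\Xi^*)^{-1}\xi$. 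This is the computation your proposal omits and the paper's proof supplies: using the compatible triple $(g,\J,\omega)$ (equivalently the $2\times2$ identity $A\J A^{*}=(\det A)\,\J$ together with $\det\dd\Xi=1$ for the area-preserving flow), the symbol equals $|\xi|^2_{\widetilde g}\big/\left|\left(\dd\Xi^{*}\right)^{-1}\xi\right|_g^{2}$ for the metric $\widetilde g=\big((\Xi)_{\#}\omega\big)(\bullet,\J\bullet)$, hence is strictly positive for $\xi\neq0$. Your symbol computation only covers $\star\dd\circ(\cdot)^\sharp\circ\sol=\1$ modulo smoothing, which is the trivial part. The homotopy argument for index zero is fine in spirit (and with the correct symbol it goes through, since area preservation keeps the symbol positive along the path), but as written it rests on the unestablished ellipticity, so it inherits the gap.
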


\begin{proof}
The previous arguments show that
\begin{equation*}
\widetilde{\mathcal{B}}(\omega)= \underbrace{\star\dd\left[\left(\Pi_{\dd\varphi_t^{-1}}\circ \KK_{\varphi_t} \circ \sol \circ \KK_{\varphi_t^{-1}}\right) (\omega)\right]^\sharp}_{:=\,\main} +\, {\rm Remainder},
\end{equation*}
where ${\rm Remainder} \in {\bf H}^{\sigma-1}(\Sigma;\R)$ for $\sigma > s$, thus having no effect on the Fredholm index.   $\main$  is a pseudodifferential operator of order zero; this is because  $\Pi$, $\KK$ are of order 0, $\sol$ is of order $-1$, and $\dd$ is of order $1$. Thus, to prove the thesis, it suffices to check that $\main$ has a nonvanishing principal symbol, viewed as a function defined on the cotangent bundle $T^*\Sigma$.

Recall that the principal symbol for the Biot--Savart operator $\sol$ has already been computed in \eqref{symbol for Biot-Savart}. Moreover, the conjugation with $\KK_{\varphi_t}$ contributes a factor of $\left(\dd\varphi_t^*\right)^{-1}$  to the symbol, where the asterisk denotes the operator adjoint. Using once again the symbol for $\dd$ (see  \eqref{d symbol}) and its expression on $(\Sigma,g)$ via the almost complex structure $\J$, we have 
\begin{align}\label{symbol for Gamma}
\ppl(\main)(x,\xi) &= {2\pi\2} \bra\J\xi,\, \dd\varphi_t^{-1} \left\{-\frac{\2}{2\pi}\cdot\frac{\J \left(\dd\varphi_t^*\right)^{-1} \xi}{\left|\left(\dd\varphi_t^*\right)^{-1}\xi\right|^2_g}\right\}\ket_g\nonumber\\
&= \bra \J\xi,\,\dd\varphi_t^{-1}\frac{\J \left(\dd\varphi_t^*\right)^{-1} \xi}{\left|\left(\dd\varphi_t^*\right)^{-1}\xi\right|^2_g}\ket_g.
\end{align}
 
It is clear from \eqref{symbol for Gamma} that the principal symbol for $\main$ is $0$-homogeneous in the fibre variable $\xi$. Then it suffices to show the \emph{ellipticity} of $\main$; that is, $\ppl(\main)(x,\xi)=0$ implies that $\xi = 0$.  

For this purpose, we infer from \eqref{symbol for Gamma} and the identity $$\left(\dd\varphi_t^{-1}\right)^* = \left(\dd\varphi_t^*\right)^{-1}$$ that
\begin{align*}
\ppl(\main)(x,\xi) = \frac{\bra \left(\dd\varphi_t^*\right)^{-1}\J\xi,\,\J\left(\dd\varphi_t^*\right)^{-1}\xi\ket_g}{\left|\left(\dd\varphi_t^*\right)^{-1}\xi\right|^2_g}.
\end{align*}
Since $(\Sigma,g)$ is a compact orientable Riemannian manifold of dimension two, it is K\"{a}hler; hence, there exists a symplectic $2$-form $\omega \in \Omega^2(\Sigma)$ such that $(g,\J,\omega)$ is a compatible triple. As a consequence, one has \begin{align*}
\ppl(\main)(x,\xi) = \frac{\omega\left(\left(\dd\varphi_t^*\right)^{-1}\xi,\,\left(\dd\varphi_t^*\right)^{-1}\J\xi\right)}{\left|\left(\dd\varphi_t^*\right)^{-1}\xi\right|^2_g}.
\end{align*}
That is,
\begin{align*}
\ppl(\main)(x,\xi) = \frac{\widetilde\omega\left(\xi,\,\J\xi\right)}{\left|\left(\dd\varphi_t^*\right)^{-1}\xi\right|^2_g}
\end{align*}
where $\widetilde{\omega}$ is the pushforward $2$-form: $$\widetilde{\omega} := \left(\varphi_t\right)_{\#}\omega \in \Omega^2(\Sigma).$$ This is well-defined as $\varphi$ is a diffeomorphism, and $\widetilde{\omega}$ is also a symplectic form (\emph{i.e.}, closed and nondegenerate). Then $\widetilde{\omega}(\bullet,\J\bullet)$ defines a Riemannian metric on $\Sigma$; call it $\widetilde{g}$. Thus
\begin{align*}
\ppl(\main)(x,\xi) &= \frac{|\xi|^2_{\widetilde{g}}}{\left|\left(\dd\varphi_t^*\right)^{-1}\xi\right|^2_g} \\
&= \frac{\widetilde{g}(\xi,\xi)}{g\left(\left(\dd\varphi_t^*\right)^{-1}\xi,\,\left(\dd\varphi_t^*\right)^{-1}\xi\right)}.
\end{align*}
This proves that $\main$ is elliptic; hence, $\widetilde{\mathcal{B}} $ is Fredholm of index zero.  \end{proof}

\subsection{Completion of the proof}

Theorem~\ref{thm} now follows from Proposition~\ref{proposition: intermediate}, Lemma~\ref{lem: divergence of W}, and Lemma~\ref{lem: final}.

\begin{appendices} 

\smallskip

\section{Geometric preliminaries}\label{sec: appendix}

We collect a few notations and preliminaries on differential geometry in the appendix.

For an $n$-dimensional Riemannian manifold $(M,g)$,	we denote by $\sharp: TM \to T^*M$, $v \mapsto v^\sharp$ the canonical isomorphism between tangent and cotangent bundles --- that is, if $\{\p_1,\ldots, \p_n\} \subset \G(TM)$ is a local co-ordinate frame such that $g = g_{ij} \dd x^i \otimes \dd x^j$ with respect to its dual coframe $\{\dd x^1,\ldots,\dd x^n\} \subset \G(T^*M)$, then $v = v^i \p_i$ if and only if $v^\sharp = v_j \dd x^j$, where $v_j = g_{ij} v^i$. The inverse operator of $\sharp$ is denoted as $\flat$. 
	
Also, write $$\Omega^j(M) := \G\left( \bigwedge^j T^*M \right)$$ for the space of differential $j$-forms on $M$. For the exterior differential $\dd: \Omega^j(M) \to \Omega^{j+1}(M)$,  define as usual the codifferential $\dd^*: \Omega^{j+1}(M) \to \Omega^j(M)$ as the  $L^2$-adjoint of $\dd$, namely that with respect to the $L^2$-inner product $\bra\bullet,\bullet\ket$ induced by $g$. In this setting,
\begin{equation*}
{\rm div}\, v = \dd^*\left( v^\sharp\right)\qquad \text{ for } v \in \G(TM).
\end{equation*}
Moreover, the Laplace--Beltrami operator $\Delta_g$ defined on the space of differential $p$-forms  $\Omega^p(M)$ is defined by $$\Delta_g = \dd \dd^* + \dd^*\dd.$$ We may also write $\bra\bullet,\bullet\ket = \bra\bullet,\bullet\ket_g$ to stress that the inner product is taken with respect to $g$. 

For $j \in \{0,1,\ldots,n\}$, let $\star : \Omega^j(M) \to \Omega^{n-j}(M)$  be the Hodge star operator. We emphasise that $\star$ in our work is taken with respect to the Riemannian metric $g$. That is, for any $\omega \in \Omega^j(M)$, define $\star \omega$ as the $(n-j)$-form such that for any $\zeta \in \Omega^j(M)$, 
\begin{equation*}
\zeta \wedge \star \omega = \bra \zeta,\omega \ket\,\dvg.
\end{equation*}
Here $\dvg \in \Omega^n(M)$ is the Riemannian volume form with respect to $g$.

For a Riemannian manifold $(M,g)$, $\na$ always denotes the Levi-Civita connection, $\ball(x,r)$ denotes the geodesic balls, $d_g$ is the Riemannian distance, $|\bullet|$ is the Riemannian length of vectors, and $\volg$ is the Riemannian volume on $(M,g)$. In addition, we write ${\rm Riem}_g$, ${\rm Ric}_g$, and ${\rm sec}_g$ for the Riemann, Ricci, and sectional curvatures on $(M,g)$, respectively.

The above discussions on $\sharp$, $\flat$, $\dd$, $\dd^*$, $\star$... all carry over to geometric quantities with Sobolev regularity. The superscripts/subscripts indicating the dependence on $g$ will be suppressed when there is no danger of confusion for the metric.

\end{appendices}

\bigskip

\noindent
{\bf Acknowledgement}. SL is deeply indebted to Prof.~Alexander Shnirelman for very insightful discussions, when the author undertook a CRM--ISM postdoctoal fellowship at the Centre de Recherches Math\'{e}matiques and the Institut des Sciences Math\'{e}matiques in Montr\'{e}al during 2017--2019. SL also thanks McGill University and Concordia University  for providing pleasant working environments.

\end{document}